\title{Counting real curves with passage/tangency conditions}
\author{Sergei Lanzat}
\address{Department of Mathematics, Technion- Israel
Institute of Technology, Haifa 32000, Israel}
\email{serjl@tx.technion.ac.il, polyak@math.technion.ac.il}
\author{Michael Polyak}
\newcommand{\FAT}[1]{\mbox{{$\mathbb{#1}$}}}
\newcommand{\Fat}[1]{\mbox{{$\scriptstyle\mathbb{#1}$}}}
\newcommand{\CL}[1]{\mbox{{$\mathcal{#1}$}}}
\newcommand{\Cl}[1]{\mbox{{$\scriptstyle\mathcal{#1}$}}}
\newcommand{\ZZ}{\FAT{Z}}
\newcommand{\DD}{\FAT{D}}
\newcommand{\PP}{\FAT{P}}
\newcommand{\RR}{\FAT{R}}
\newcommand{\CC}{\FAT{C}}
\newcommand{\SSS}{\mathbb{S}}
\newcommand{\ST}{\mathbb{S}\mathrm{T}}
\newcommand{\St}{\scriptstyle\mathbb{S}\mathrm{T}}
\newcommand{\PT}{\mathbb{P}\mathrm{T}}
\newcommand{\rr}{\Fat{R}}
\newcommand{\cl}[1]{\overline{#1}}
\newcommand{\minus}{\smallsetminus}
\newcommand{\eps}{\varepsilon}
\DeclareMathOperator{\ind}{ind}
\renewcommand{\(}{\left(}
\renewcommand{\)}{\right)}
\newcommand{\be}{\begin{itemize}}
\newcommand{\ee}{\end{itemize}}
\newtheorem{thm}{Theorem}[section]
\newtheorem{thm*}{Theorem}
\newtheorem{lem}[thm]{Lemma}
\newtheorem{prop}[thm]{Proposition}
\newtheorem{cor}[thm]{Corollary}
\newtheorem{rem}[thm]{Remark}
\begin{document}

 \begin{abstract}
We study the following question: given a set $\CL{P}$ of $3d-2$
points and an immersed curve $\Gamma$ in the real plane $\RR^2$, all
in general position, how many real rational plane curves of degree
$d$ pass through these points and are tangent to this curve. We
count each such curve with a certain sign, and present an explicit
formula for their algebraic number. This number is preserved under
small regular homotopies of a pair $(\CL{P}, \Gamma)$ but jumps (in
a well-controlled way) when in the process of homotopy we pass a
certain singular discriminant. We discuss the relation of such
enumerative problems with finite type invariants. Our approach is
based on maps of configuration spaces and the intersection theory in
the spirit of classical algebraic topology.
 \end{abstract}

\keywords{enumerative geometry, real rational curves, immersed
curves, finite type invariants}
\subjclass[2000]{14N10, 57N35, 14H50, 14P99}
\thanks{Both authors were partially supported by the ISF grant 1343/10.}

\maketitle

\section{Introduction}
\subsection{History}
A classical problem in enumerative geometry is the study of the
number of certain algebraic curves of degree $d$ passing through
some number of points in the affine or projective plane. This
question is not very interesting if we consider all curves of degree
$d$, due to the fact that the set of such curves forms the
projective space of dimension $\frac12 d(d+3)$,  so the question of
passing through points is simply a question of solving a system of
linear equations. Thus, one usually asks this question about some
families of algebraic curves of degree $d$, e.g., curves of a fixed
genus $g$. In particular, there is an old question of determining
the number $N_d$ (resp. $N_d(\RR)$) of rational, i.e. genus $g=0$,
curves of degree $d$ passing through $3d-1$ points in general
position in $\CC\PP^2$ (resp. $\RR\PP^2$). Here $3d-1$ is complex
(resp. real) dimension of an algebraic variety of irreducible
rational curves of degree $d$ in $\CC\PP^2$ (resp. $\RR\PP^2$).

The numbers $N_1=N_2=N_1(\RR)=N_2(\RR)=1$ go back to antiquity;
$N_3=12$ was computed by J.~Steiner in $1848$. The late $19$-th
century was the golden era for enumerative geometry,  and
H.G.~Zeuthen in $1873$ could compute the number $N_4=620$. By then,
the art of resolving enumerative problems had attained a very high
degree of sophistication and, in fact, its foundations could no
longer support it. Hilbert asked for rigorous foundation of
enumerative geometry, including it as the $15$-th problem in his
list.

The $20$-th century witnessed great advances in intersection theory.
In the seventies and eighties, a lot of old enumerative problems
were solved and many classical results were verified. However, the
specific question of determining the numbers $N_d$ turned out to be
very difficult. In fact, in the eighties only one more of the
numbers was unveiled: the number of quintics $N_5=87304$.

The revolution took place around $1994$ when a connection between
theoretical physics (string theory) and enumerative geometry was
discovered. As a corollary,  M.~Kontsevich and Yu.~Manin in \cite{K-M}
(see also \cite{F-P}) gave a solution in terms of a recursive formula
$$N_d=\sum\limits_{d_1+d_2=d, \;d_1, d_2>0}N_{d_1}N_{d_2}\left(d_1^2d_2^2
\binom{3d-4}{3d_1-2}-d_1^3d_2\binom{3d-4}{3d_1-1}\right).$$

But all these advances were done in the complex algebraic geometry.
In the real case the situation is different. Until $2000$ nothing
was known about $N_d(\RR)$ for $d\geq 3$. In $2000$ A.~Degtyarev and
V.~Kharlamov \cite{D-K} showed that $N_3(\RR)$ may be $8, 10$ or
$12$, depending on the configuration of $8=3\cdot 3-1$ points in
$\RR\PP^2$. This result reflects a general problem of a real
enumerative geometry: such numbers are usually not constant, but do
depend on a configuration of geometrical objects. A natural way to
overcome this difficulty is to try to assign some signs and
multiplicities to objects in question so that the corresponding
algebraic numbers remain constant. Already in the work of
Degtyarev-Kharlamov one can see that one can assign certain
multiplicities (signs) to real cubics passing through a given $8$ points, 
so that the weighted sum of these cubics is independent on the
configuration of points. In $2003$ J.-Y.~Welschinger \cite{W1} found
a way to assign signs to real rational curves of any degree. 
Welschinger's sign $w_C$ of a real rational curve $C$ is defined as 
$w_C=(-1)^{m(C)}$, where $m(C)$ is the number of solitary points 
of $C$ (called the mass of $C$).
Welschinger's main theorem states that the corresponding weighted 
sum $W_d=\sum_C w_C$ of all curves passing through the given points 
is independent of the choice of points. The number $W_d$ is called the 
Welschinger's invariant. In particular, $|W_d|$ gives a lower bound for 
the actual number $N_d(\RR)$ of real rational plane curves passing 
through any given set of $3d-1$ generic points. In the case of cubics 
$(d=3)$ from the Degtyarev-Kharlamov theorem one can see that 
$W_3=8$.

The question of passing through some number of points is the
simplest one. The next step is to ask about the number of algebraic
curves passing through some number of points and tangent to some
given algebraic curves. In particular, in 1848, J.~Steiner \cite{S}
asked how many conics are tangent to five given conics in
$\CC\PP^2$. The correct answer of $3264$ was found by M.~ Chasles
\cite{C} in 1864. In 1984, W.~Fulton  asked how many of these conics
can be real and in 1997, F.~ Ronga, A.~ Tognoli and Th.~ Vust
\cite{R-T-V} proved that all $3264$ conics can be real. Another
celebrated problem is due to Zeuthen. Given $l$ lines and $k=d(d +
3)/2-1$ points in $\CC\PP^2$, the Zeuthen number $N_d(l)$ is the
number of nonsingular complex algebraic curves of degree $d$ passing
through the $k$ points and tangent to the $l$ lines. It does not
depend on the chosen generic configuration $C$ of points and lines.
If, however, points and lines are real, the corresponding number
$N^{\rr}_d (l,C)$ of real curves usually depends on the chosen
configuration. For $l=1$, it was shown by F.~ Ronga \cite{R} that
the real Zeuthen problem is maximal: there exists a configuration
$C$ such that $N^{\rr}_d (1,C) = N_d(1)$. For $l=2$ a similar
maximality result was obtained by B.~ Bertrand \cite{B} using
Mikhalkin's tropical correspondence theorem. In the complex case in
$1996$ L.~Caporaso and J.~Harris found the recursive formulas in the
spirit of M.~Kontsevich for such tangency questions. In the real
case that kind of questions of tangency is quite new and the serious
development is just beginning. J.-Y.~Welschinger in \cite{W3}
considered curves in $\RR\PP^2$ passing through a generic set of
points and tangent to a non-oriented smooth simple zero-homologous
curve. See Section \ref{sub:intro_main} for a detailed discussion of
Welschinger's results and a comparison with the present work.

We are interested to merge rigid algebro-geometric objects with
flexible objects from smooth topology. We count algebraic curves in
$\RR^2$ that pass through a generic set of points and are tangent to
an oriented immersed curve. In addition, we relate the dependence on
a chosen configuration to the theory of finite type invariants.

\subsection{Motivation}\label{sub:motivation}
Let us start with a toy model: consider the case $d=1$.

Let $L$ be a set of lines in $\RR^2$ passing through a fixed point
$p$ and tangent to a (generic) oriented immersed plane curve
$\Gamma$. The problem is to introduce a sign $\eps_l$ for each such
line $l\in L$ so, that the total algebraic number $N_1(p, \Gamma)=\sum_{l\in
L}\eps_l$ of lines does not change under homotopy of $\Gamma$ in
$\RR^2\minus p$. It is easy to guess such a sign rule. Indeed, under
a deformation shown in Figure \ref{fig:morse}a, two new lines
appear, so their contributions to $N_1(p, \Gamma)$ should cancel out.
Thus, their signs should be opposite and we get the sign rule shown in Figure
\ref{fig:morse}b. Note that only the orientation of $\Gamma$ is used
to define it; $l$ is not oriented.

\begin{figure}[htb]
    \centering
    \includegraphics[width=5in]{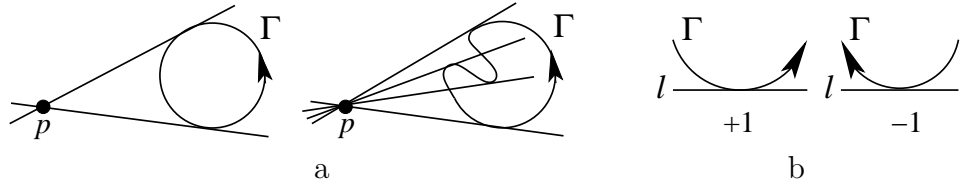}
\centerline{\hspace{0.8in}a\hspace{2.4in}b}
\caption{Counting lines with signs.\label{fig:morse}}
\end{figure}

Suppose that $p$ is close to infinity (i.e., lies in the unbounded
region of $\RR^2\minus\Gamma$). In this case we get
$N_1(p, \Gamma)=2\ind(\Gamma)$, where $\ind(\Gamma)$ is the Whitney
index (a.k.a. rotation number) of $\Gamma$, i.e. the number of turns made 
by the tangent vector as we pass once along $\Gamma$ following the
orientation. In other words, $\ind(\Gamma)$ equals to
the degree of the Gauss map $G_{\Gamma}:\SSS^1\to\SSS^1$ given by
$G_{\Gamma}(t)=\cfrac{\gamma^\prime(t)}{\|\gamma^\prime(t)\|}$, where
$\gamma:\SSS^1\looparrowright\RR^2$ is a parametrization of $\Gamma$.
Hence, the Whitney index can be calculated as an algebraic number of
preimages of a regular value $\xi\in\SSS^1$ of the Gauss map $G_{\Gamma}$,
see Figure \ref{fig:indices}a.

\begin{figure}[htb]
\centering
    \includegraphics[height=1.4in]{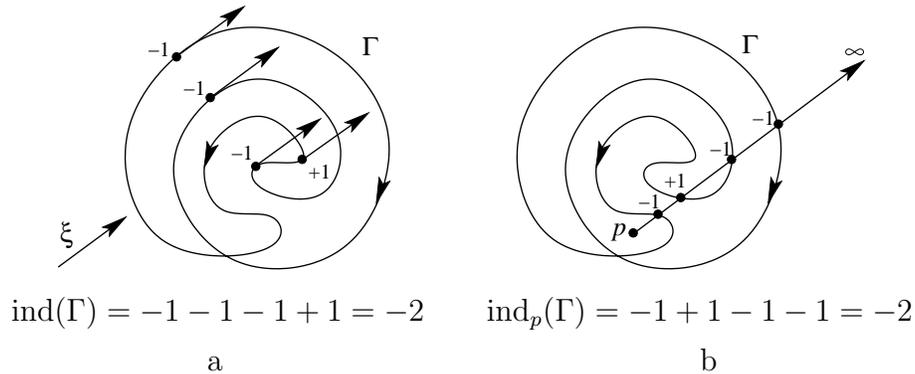}
    $$\ind(\Gamma)=-1-1-1+1=-2 \hspace{0.3in}\ind_p(\Gamma)=-1+1-1-1=-2$$
    $$\text{a}\hspace{2.5in}\text{b}$$
\caption{Whitney index of $\Gamma$ and index of $p$ w.r.t. $\Gamma$.
\label{fig:indices}}
\end{figure}

While $N_1(p, \Gamma)$ is preserved under homotopies of $\Gamma$ in $\RR^2\minus
p$, it changes when $\Gamma$ passes through $p$, see Figure \ref{fig:jump}.

\begin{figure}[htb]
    \centering
    \includegraphics[height=0.8in]{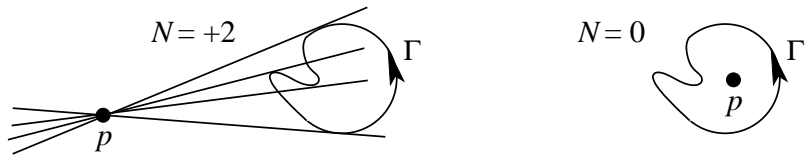}

\caption{Counting lines with signs.\label{fig:jump}}
\end{figure}

The compensating term is also easy to guess and we finally obtain
\begin{equation}\label{eq:N motivation}
N_1(p, \Gamma)=2\ind(\Gamma)-2\ind_p(\Gamma)\ .
\end{equation}
Here the index $\ind_p(\Gamma)$ of $p$ w.r.t. $\Gamma$ is the number
of turns made by the vector connecting $p$ to a point $x\in\Gamma$,
as $x$ passes once along $\Gamma$ following the orientation. It may
be computed as the intersection number $I([p,\infty],\Gamma;\RR^2)$
of a $1$-chain $[p,\infty]$ (i.e. an interval connecting $p$ with a point near
infinity of $\RR^2$) with an oriented $1$-cycle $\Gamma$ in $\RR^2$.
See Figure \ref{fig:indices}b.

The appearance of $\ind(\Gamma)$ and $\ind_p(\Gamma)$ in the above
formula comes as no surprise: in fact, these are the only invariants
of the curve $\Gamma$ under its homotopy in the class of immersions
in $\RR\minus p$. These are the simplest finite type invariants of
plane curves, see \cite{Arn}.

In this simple example we see two main distinctive differences of
real enumerative problems vs. complex problems of a similar
passage/tangency type. Firstly, in the real case we are to count
algebraic curves under the consideration with signs. Secondly, over
$\CC$ the answer is a number which does not depend on the relative
position of the set of points and the curve $\Gamma$. Over $\RR$,
however, the answer depends on the configuration: it is preserved
under small deformations of $\Gamma$ and the set of points, but
experiences certain (well-controlled) jumps when the configuration
crosses certain singular discriminant in the process of homotopy.
Thus, in the general case for similar enumerative problems we should
not expect to get an answer as one number, but rather as a
collection of numbers, depending on the relative configuration of
points and the smooth curve.

Two main questions in this kind of problems are

\be

\item[1.] How to find such sign rules, i.e. how to assign a
certain sign to each algebraic curve under consideration, so that
the total algebraic number is invariant under small deformations?

\item[2.] How does the singular discriminant looks like, and what is
the explicit structure of the formula for the algebraic number of
curves?

\ee

\subsection{Main results and the structure of the paper}
 \label{sub:intro_main}
We count the algebraic number of real plane rational nodal curves of
degree $d$ passing through a given set of $3d-2$ generic points and
tangent to a generic immersed curve in the plane $\RR^2$. We get a
number, which does not depend on a regular homotopy of the curve in
a complement of a certain singular discriminant, see Subsection
\ref{subsect:generalposition}. As the curve passes through the
discriminant, this number changes in a well-controlled way, so that
it defines a finite type invariant of degree one, see Section
\ref{sec:FTI}.
A mixture of rigid algebro-geometric objects with smooth topology
gives to our problem a curious flavor, leading to a nice merging of
features and techniques originating in both of these fields. In
particular, this type of passage/tangency problems turns out to be
intimately related to a theory of finite type invariants of plane
curves, similarly to the toy case of $d=1$ considered in Subsection
\ref{sub:motivation} above.

For this we count rational nodal curves with signs and add certain
correction terms, which come from degenerate cases of nodal,
reducible and cuspidal curves. We use Welschinger's signs and show
an easy way to produce new signs suitable for tangency questions.
The technique of proofs uses the concept of configuration spaces and
the intersection theory in the spirit of classical differential
topology.

The question of passage/tangency conditions for real rational plane
algebraic curves was considered earlier by J.-Y.~Welschinger in
\cite{W3}. He considered projective curves in $\RR\PP^2$ passing
through a generic set  $\CL{P}$ and tangent to a non-oriented smooth
simple zero-homologous curve $\Gamma$. In \cite[Remark $4.3(3)$]{W3}
the author suggested the generalization to the case of a
non-oriented smooth immersed  curve $\Gamma$, which bounds an
immersed disk; unfortunately, this formula does not extend to
arbitrary immersed curves, e.g. to a figure-eight curve. There is a
number of differences between \cite{W3} and the present work.
Firstly, we consider oriented curves in $\RR^2$ (thus adding
orientations both to the curve and to the ambient manifold).
Secondly, we consider immersed curves. Finally, in contrast with
\cite{W3}, where the author used 4-dimensional symplectic geometry
and hard-core techniques from the theory of moduli spaces of
pseudo-holomorphic curves, we use down to earth classical tools 
of differential topology. In this way we also get a clear geometric
interpretation of Welschinger's number $w_C$ as the orientation of a
certain surface in $\SSS \mathrm{T}^*\RR^2$ (i.e. the manifold of
oriented contact elements of the plane), which parameterizes real
rational algebraic curves passing through $\CL{P}$.

The paper is organized in the following way. In Section 2 we
introduce objects of our study, define signs of tangency, list the
requirements of a general position, and formulate the main theorem.
Section 3 is dedicated to the proofs. We interpret the desired
number of curves as a certain intersection number; the main claim
follows from different ways of its calculation. In Section 4 we
discuss a relation of real enumerative geometry to finite type
invariants.


\section{The Statement of the Main Result. Sketch of the Proof.}

\subsection{Curves and points in general position.
 \label{subsect:generalposition}}
Let $\CL{P}=\{p_i\}_{i=1}^{3d-2}$, $p_i\in\RR^2$,
be a $(3d-2)$-tuple of (distinct) points in
$\RR^2$. Consider the following sets $\CL{C}_{\Cl{P}}$,
$\CL{T}_{\Cl{P}}$, $\CL{R}_{\Cl{P}}$ in $\RR^2\minus\CL{P}$
determined by $\CL{P}$:
 \be
\item[$(i)$] We say that
$p\in\CL{C}_{\Cl{P}}$ (resp. $p\in\CL{T}_{\Cl{P}}$), if
there exists an irreducible rational curve of degree $d$, which
passes through $\CL{P}$, has a cusp (resp. tacnode or a
triple point) at $p$ and whose remaining singularities are nodes in
$\RR^2\minus\CL{P}$.
\item[$(ii)$] We say that $p\in\CL{R}_{\Cl{P}}$, if
there exists a reducible curve $C_1\cup C_2$ of degree $d$ with
$p\in C_1\cap C_2$, which passes through $\CL{P}$ and such that
$C_1$ and $C_2$ are irreducible rational nodal curves with nodes in
$\RR^2\minus\CL{P}$, which intersect transversely at their
non-singular points.
 \ee

For $p\in\CL{P}$ denote by $\CL{D}(p)$ the set of all irreducible 
rational nodal curves of degree $d$, which pass through $\CL{P}$, 
have a crossing node at $p$ and whose remaining nodes are in 
$\RR^2\minus\CL{P}$. Denote by $\CL{D}(\CL{P})$ the set 
$\CL{D}(p)$ together with curves listed in $(i)-(ii)$ above. Denote also
$\mathfrak{S}:=\CL{P}\cup\CL{C}_{\Cl{P}}\cup\CL{R}_{\Cl{P}}$.

Suppose that the $(3d-2)$-tuple $\CL{P}$ is in general position.
Explicitly, we will assume that the following conditions hold:

\be
\item[1.] For any $k<d$, no $3k$ points from $\CL{P}$ lie on one
rational curve of degree $k$.
\item[2.] The set $\mathfrak{S}$ is finite. Every point $p$ in
$\CL{C}_{\Cl{P}}\cup\CL{R}_{\Cl{P}}$ lies on
exactly one curve from $\CL{D}(\CL{P})$. For every $p\in\CL{P}$ 
the set $\CL{D}(p)$ is finite.
\item[3.] All rational curves of degree $d$ passing through
$\CL{P}$ are either irreducible nodal, with nodes in
$\RR^2\minus\mathfrak{S}$, or belong to $\CL{D}(\CL{P})$.
%
\ee
\noindent Now, let $\Gamma$ be a
generic immersed oriented curve in $\RR^2$ in general position
w.r.t. $\CL{P}$. Let us spell this requirement in more details. By a
general position we mean that

\be
\item[4.] The curve $\Gamma$ is  an immersed curve with a finite 
number of double points of transversal self-intersection as the only 
singularities.

\item[5.] The curve $\Gamma$ intersects each of the curves from the
set $\CL{D}(\CL{P})$ transversally, in points which do not belong to
$\mathfrak{S}$.

\item[6.] Every irreducible rational nodal degree $d$ curve passing 
through $\CL{P}$ is tangent to $\Gamma$ at most at one
point, with the tangency of the first order. Every point of $\Gamma$
is a point of tangency with at most one such a curve.
\ee

\noindent Define the singular discriminant $\Delta$ as the set of pairs
$(\CL{P},\Gamma)$ that violate the general position
requirements listed above.

\subsection{\textbf{Signs of points and curves}.
 \label{subsec:signscubics}}
Recall that the Welschinger's sign $w_C $ of a rational 
curve $C$ is defined as $w_C=(-1)^{m(C)}$,  where $m(C)$ is the 
mass (i.e., the number of solitary points) of $C$.
For each $p\in\mathfrak{S}$ we define $\iota_p$ by
$$
\iota_p=\begin{cases}
        - W_d +2\cdot\sum_{C\in\Cl{D}(p)}w_C &
        \text{if $p\in\CL{P}$}, \\

        -w_C        & \text{if $p\in\CL{C}_{\Cl{P}}$}, \\

         w_C         & \text{if $p\in\CL{R}_{\Cl{P}}$} 
        \end{cases}
$$
where  $C$ is (the unique) cuspidal or reducible curve
of degree $d$ passing through $\{p\}\cup\CL{P}$ for 
$p\in\CL{C}_{\Cl{P}}\cup\CL{R}_{\Cl{P}}$.

Denote by $\CL{M}_d(\CL{P}, \Gamma)$ the set of real rational 
nodal curves passing through $\CL{P}$ and tangent to $\Gamma$. 
We fix the standard orientation $o_{\rr^2}$ on the plane $\RR^2$
once and for all.
To each $C\in\CL{M}_d(\CL{P}, \Gamma)$ we assign a sign
$\varepsilon_C=w_C\cdot\tau_C$, where $\tau_C$ is a sign of 
tangency of $C$ with $\Gamma$, which is defined similarly to 
Subsection \ref{sub:motivation} as follows: \\
Let $p$ be the point of tangency of $\Gamma$ with $C$.
 For a sufficiently small radius $r$, $C$ divides the disk $\DD(p,r)$
centered at $p$ into two parts. Since the tangency of $\Gamma$ 
and $C$ is of the first order, their quadratic approximations at this point 
$p$ differ. Hence, $\Gamma\cap \DD(p,r)$ belongs to the closure of one of
the two parts of $\DD(p,r)\minus C$. Let $n$ be a normal vector to $C$
at $p$, which looks into  the closure of the part which contains
$\Gamma$, and let $t$ be the tangent vector $t$ to $\Gamma$ at $p$.
Set $\tau_C=+1$ if the frame $(t, n)$ defines the positive
orientation $o_{\rr^2}$ of $\RR^2$, and $\tau_C=-1$ otherwise. See
Figure \ref{fig:Signs of tangency} (compare also with Figure
\ref{fig:morse}b).

\begin{figure}[htb]
\centerline{\includegraphics[width=3.6in]{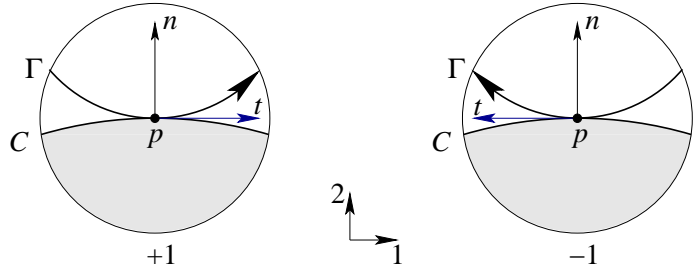}}
\caption{Signs of tangency $\tau_C$.\label{fig:Signs of tangency}}
\end{figure}

\noindent Note that while the immersed curve  $\Gamma$ is oriented, 
the algebraic curve $C$ is not, and we use just the orientation of
$\Gamma$ in order to define the sign $\tau_C$.

\subsection{The statement of the main result.}
Let $N_d(\CL{P}, \Gamma)$ be the algebraic number
$$N_d(\CL{P}, \Gamma):=
\sum\limits_{C\in\Cl{M}_d(\Cl{P}, \Gamma)}\varepsilon_C$$ of real
rational nodal curves passing through $\CL{P}$ and tangent to
$\Gamma$. The main result of this work is the following

\begin{thm}\label{thm:main result}
Let $\CL{P}=\{p_1, \dots ,p_{3d-2}\}\subseteq\RR^2$ and $\Gamma$ be
an immersed oriented curve in $\RR^2$, all in general position. Then

\begin{equation}\label{eqn:main formula}
N_d(\CL{P}, \Gamma)=2\( W_d\cdot
\ind(\Gamma)+\sum\limits_{p\in\mathfrak{S}} \iota_p\cdot
\ind_p(\Gamma)\).
\end{equation}
The number $N_d(\CL{P}, \Gamma)$ is invariant under a regular
homotopy of the pair $(\CL{P}, \Gamma)$ in (each connected component
of) the complement of the singular discriminant $\Delta$.
\end{thm}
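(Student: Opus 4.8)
The plan is to realize $N_d(\CL{P},\Gamma)$ as an intersection number in the manifold $\ST^*\RR^2$ of oriented contact elements of the plane, and then to compute this intersection number in two different ways, mimicking the toy case $d=1$. First I would describe the relevant configuration space: let $\CL{X}\subset\ST^*\RR^2$ be the locus of pairs $(x,\ell)$ such that $x$ lies on some rational nodal curve $C$ of degree $d$ passing through $\CL{P}$ and $\ell$ is the contact element tangent to $C$ at $x$. Generically $\CL{X}$ is a surface (a curve's worth of tangent directions along each of the finitely many algebraic curves through $\CL{P}$), and the general position hypotheses 1--3 guarantee that its only singular/boundary behavior occurs over the finite set $\mathfrak{S}$, where branches of algebraic curves degenerate (cusps, tacnodes, triple points, reducible curves, crossing nodes at the $p_i$). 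The orientation of $\CL{X}$ is exactly Welschinger's sign $w_C$ on the piece coming from $C$, which is the geometric interpretation of $w_C$ advertised in the introduction; I would prove $\CL{X}$ is a cycle relative to fibers over $\mathfrak{S}$, with boundary coefficients governed by the $\iota_p$.

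Next I would encode the tangency curve $\Gamma$ as a $2$-cycle (or $2$-chain) $\widetilde\Gamma\subset\ST^*\RR^2$: the canonical lift $\gamma(t)\mapsto(\gamma(t),\,[\gamma'(t)])$, i.e. the $1$-jet extension of $\Gamma$. A real rational curve $C\in\CL{M}_d(\CL{P},\Gamma)$ tangent to $\Gamma$ at $p$ corresponds precisely to an intersection point of $\CL{X}$ with $\widetilde\Gamma$ (same base point, same contact element), and a sign computation shows that the local intersection sign equals $\tau_C$ as defined in Subsection \ref{subsec:signscubics}. Hence
\begin{equation*}
N_d(\CL{P},\Gamma)=I(\CL{X},\widetilde\Gamma;\ST^*\RR^2).
\end{equation*}
The point of working in $\ST^*\RR^2$ rather than $\RR^2$ is that tangency becomes transverse intersection, so this identification is the technical heart of setting up the problem.

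Now I would compute $I(\CL{X},\widetilde\Gamma)$ homologically. The circle bundle $\ST^*\RR^2\to\RR^2$ is trivial, $\ST^*\RR^2\cong\RR^2\times\SSS^1$; its relevant (relative, or compactly supported) homology is generated by a fiber $F=\{x_0\}\times\SSS^1$ and by lifts of chains in $\RR^2$. Projecting $\widetilde\Gamma$ to $\RR^2$ gives the immersed cycle $\Gamma$, and the "vertical degree" of $\widetilde\Gamma$ — how many times the contact element winds around as one traverses $\Gamma$ — is $\ind(\Gamma)$, the Whitney index, since the contact element is the (unoriented) tangent direction and its winding is the degree of the Gauss map. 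So in homology $[\widetilde\Gamma]$ decomposes into a "horizontal" part represented by $\Gamma$ and a "vertical" part of multiplicity $\ind(\Gamma)$ times a section. Dually, the class of $\CL{X}$ pairs with the fiber class to give $W_d$ (the total Welschinger count, since a generic fiber over a point near infinity meets each of the $W_d$-weighted curves once in its two tangent directions — this is where the factor $2$ and Welschinger's invariance theorem enter), and it pairs with section classes over chains dual to the points $p\in\mathfrak S$ with weight $\iota_p$. Assembling: the vertical part of $\widetilde\Gamma$ contributes $2W_d\cdot\ind(\Gamma)$, and the horizontal part, pushed to $\RR^2$, meets the boundary data of $\CL{X}$ in exactly the intersection numbers $I([p,\infty],\Gamma)=\ind_p(\Gamma)$ weighted by $2\iota_p$, yielding formula \eqref{eqn:main formula}. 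The homotopy invariance statement is then automatic: inside a component of the complement of $\Delta$ the cycles $\CL{X}$ and $\widetilde\Gamma$ vary by a homology (a cobordism of cycles), so their intersection number is unchanged.

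The main obstacle I expect is the rigorous analysis of $\CL{X}$ near the fibers over $\mathfrak{S}$: one must show that $\CL{X}$ is genuinely a manifold with the claimed boundary behavior, that the boundary cycle over each $p\in\mathfrak{S}$ carries exactly the multiplicity $\iota_p$, and that the three cases in the definition of $\iota_p$ (the $-W_d+2\sum_{C\in\Cl{D}(p)}w_C$ term for $p\in\CL{P}$, the $\mp w_C$ terms for cusps and reducible curves) arise correctly from the local models of how the tangent-line surface closes up as the algebraic curve acquires the corresponding degeneration. This is essentially a collection of local normal-form computations — a family of conics degenerating to a cuspidal cubic's tangent cone, two branches of a reducible curve, a node passing through a marked point $p_i$ — each of which must be matched against the sign conventions; the bookkeeping of orientations (of $\RR^2$, of $\SSS^1$, of $\CL{X}$ via $w_C$, and of $\widetilde\Gamma$) is where errors are most likely to hide. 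Everything else — the homological decomposition in $\RR^2\times\SSS^1$, the identification of vertical degree with $\ind(\Gamma)$, the identification of $I([p,\infty],\Gamma)$ with $\ind_p(\Gamma)$ — is routine once the cycle $\CL{X}$ and its boundary are under control.
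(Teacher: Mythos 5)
Your proposal is essentially the same as the paper's proof: you construct the same surface $\Sigma$ (your $\CL{X}$) in $\ST^*\RR^2$ oriented by Welschinger's sign, lift $\Gamma$ to $L$ (which is a $1$-cycle, not a $2$-cycle as you wrote), realize $N_d(\CL{P},\Gamma)=I(L,\cl{\Sigma};M)$, and then split this intersection number into the winding term $2W_d\cdot\ind(\Gamma)$ and corrections $2\iota_p\cdot\ind_p(\Gamma)$ coming from the boundary circles $\sigma_p$ of the compactified surface over $\mathfrak{S}$ --- the paper implements the split via an explicit cobordism $2$-chain $\CL{K}$ with $\partial\CL{K}=[L]-[L']$ rather than a literal homological decomposition (section classes die in $\mathrm{H}_1(\DD^2\times\SSS^1)$), but the mechanism is the same. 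The local normal-form analysis near $\mathfrak{S}$ that you flag as the main obstacle is indeed the technical heart of the paper's proof, carried out case by case in the proposition computing $I(\CL{K},\sigma_p;M)=2\iota_p\cdot\ind_p(\Gamma)$.
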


\subsection{The case of cubics.}\label{subsection:cubics}
Degree $d=3$ is the first case when all general difficulties appear.
Namely, the number $N_3(\RR)$ is different from one and depends on a
configuration of points, and curves may have cuspidal singularities.
Although cubics have no tacnodes or triple points, these
singularities do not contribute to \eqref{eqn:main formula}, so are
irrelevant for computation of $N_d(\CL{P}, \Gamma)$.
\begin{figure}[thb]
    \centering
    \includegraphics[width=5in]{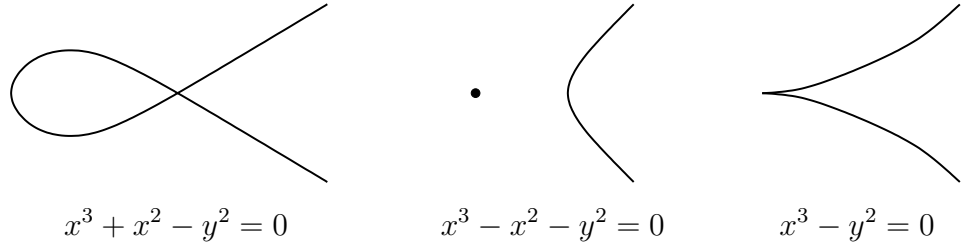}
    $$\quad x^3+x^2-y^2=0 \hspace{0.8in} x^3-x^2-y^2=0 \hspace{0.6in} x^3-y^2=0 $$
    \vspace{-0.3in}
    \caption{A crossing node, a solitary node and a cusp \label{fig:nodal_cubics}}
\end{figure}

There are three types of irreducible real rational cubics: cubics
with one crossing node, cubics with one solitary node and cuspidal
cubics with one cusp point, see Figure \ref{fig:nodal_cubics}. 
A reducible cubic is the union of a line and a conic. 

We count the algebraic number of real rational nodal cubics passing 
through seven generic points and tangent to a generic immersed curve.
The number of curves in $\CL{D}(\CL{P})$ and points in
$\CL{C}_{\Cl{P}}, \CL{R}_{\Cl{P}}$ are bounded from above as
follows. The number of points in $\CL{R}_{\Cl{P}}$ is no more than
$\binom{7}{2}=21$. Due to \cite{K-S}, there are at  most $24$
cuspidal cubics passing through  seven points in general position in
$\CC\PP^2$, hence $|\CL{C}_{\Cl{P}}|\leq 24$. Also, there are
no tacnodes or triple points, so $\CL{T}_{\Cl{P}}=\varnothing$.
From \cite[Theorem $3.2$]{W1} one can deduce that
$|\CL{D}(p)|\in\{0,1\}$. Since $W_3=8$ and for a nodal cubic
$m(C)=0,1$ if $C$ has a crossing or solitary node respectively, we
get
$$
\iota_p=\begin{cases}
        -8+2|\CL{D}(p)| & \text{if $p\in\CL{P}$}, \\
        -1            & \text{if $p\in\CL{C}_{\Cl{P}}$}, \\
       +1           & \text{if $p\in\CL{R}_{\Cl{P}}$}.
        \end{cases}
$$
and Theorem \ref{thm:main result} implies:
\begin{cor}
Let $\CL{P}=\{p_1, \ldots p_7\}\subseteq\RR^2$ and $\Gamma$ be an
immersed oriented curve in $\RR^2$,  all in general position. Then

\begin{equation*}\label{eqn:main formula_cubics}
N_3(\CL{P},\Gamma)=2\big(8\ind(\Gamma)+\sum\limits_{p\in\Cl{P}}\iota_p
\cdot\ind_p(\Gamma)-\sum\limits_{p\in\Cl{C}_{\Cl{P}}}\ind_p(\Gamma)
+\sum\limits_{p\in\Cl{R}_{\Cl{P}}}\ind_p(\Gamma)\big)
\end{equation*}
\end{cor}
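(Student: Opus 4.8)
The plan is to obtain this corollary directly from Theorem~\ref{thm:main result} by specializing every ingredient of formula~\eqref{eqn:main formula} to $d=3$. First I would record the two external facts that turn the general formula into the displayed one: the value $W_3=8$ of the Welschinger invariant, which follows from the Degtyarev--Kharlamov classification \cite{D-K} as recalled in the Introduction, and the bound $|\CL{D}(p)|\in\{0,1\}$ for each $p\in\CL{P}$, which one reads off from \cite[Theorem~3.2]{W1}. Next I would describe the discriminant locus for cubics. A plane cubic has arithmetic genus $1$, so an irreducible rational (geometric genus $0$) cubic has total $\delta$-invariant exactly $1$; in particular it can carry neither a tacnode ($\delta=2$) nor an ordinary triple point ($\delta=3$). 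Hence $\CL{T}_{\Cl{P}}=\varnothing$, $\mathfrak{S}=\CL{P}\cup\CL{C}_{\Cl{P}}\cup\CL{R}_{\Cl{P}}$, and this is a disjoint union: the last two sets lie in $\RR^2\minus\CL{P}$ by definition and are mutually disjoint by general position assumption~2, which attaches to each point of $\CL{C}_{\Cl{P}}\cup\CL{R}_{\Cl{P}}$ a unique curve of $\CL{D}(\CL{P})$.

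The one substantive step is the evaluation of the local weights $\iota_p$ of Subsection~\ref{subsec:signscubics}, which reduces to computing the masses of the degenerate cubics involved. For $p\in\CL{C}_{\Cl{P}}$ the associated curve $C$ is a cuspidal cubic whose only singular point is the cusp at $p$ (there are no remaining nodes, again because $\delta(C)=1$); a cusp is not a solitary point, so $m(C)=0$, $w_C=1$ and $\iota_p=-w_C=-1$. For $p\in\CL{R}_{\Cl{P}}$ the curve $C=C_1\cup C_2$ is a line together with an irreducible (hence smooth) conic meeting transversally at non-singular points; its only singularities are the real crossings of the two components, and none of them can be an isolated real point, since the line component contributes a genuine arc through each such point. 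Thus $m(C)=0$, $w_C=1$ and $\iota_p=w_C=+1$. Finally, for $p\in\CL{P}$ every $C\in\CL{D}(p)$ is a nodal rational cubic with a single crossing node, so again $m(C)=0$, $w_C=1$; together with $|\CL{D}(p)|\le1$ this gives $\sum_{C\in\CL{D}(p)}w_C=|\CL{D}(p)|$ and hence $\iota_p=-W_3+2|\CL{D}(p)|=-8+2|\CL{D}(p)|$, in agreement with Subsection~\ref{subsection:cubics}.

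With these values in hand I would substitute $W_d=W_3=8$ into \eqref{eqn:main formula} and split the sum $\sum_{p\in\mathfrak{S}}\iota_p\,\ind_p(\Gamma)$ over the three disjoint pieces $\CL{P}$, $\CL{C}_{\Cl{P}}$, $\CL{R}_{\Cl{P}}$, on which $\iota_p$ equals $\iota_p$, $-1$ and $+1$ respectively; this reproduces verbatim the displayed formula for $N_3(\CL{P},\Gamma)$. I do not expect a genuine obstacle here: the corollary is a specialization rather than a new theorem, and the only places that demand a little care are the two mass computations above (one must make sure that a cusp and a transverse crossing along a real line fail to be solitary points) and the correct import of $W_3=8$ from \cite{D-K} and of $|\CL{D}(p)|\le1$ from \cite[Theorem~3.2]{W1}. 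For orientation one may additionally remark that $|\CL{R}_{\Cl{P}}|\le\binom{7}{2}=21$ and $|\CL{C}_{\Cl{P}}|\le24$ (the latter from the classical count of cuspidal cubics through seven general points \cite{K-S}), which already show that $N_3(\CL{P},\Gamma)$ is a well-defined finite sum, although these bounds play no role in the formula itself.
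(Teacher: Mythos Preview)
Your proposal is correct and follows essentially the same route as the paper: the corollary is obtained by specializing Theorem~\ref{thm:main result} to $d=3$, importing $W_3=8$ and $|\CL{D}(p)|\in\{0,1\}$ from \cite{D-K} and \cite[Theorem~3.2]{W1}, computing $\iota_p$ on each piece of $\mathfrak{S}$, and splitting the sum. You supply a bit more justification than the paper does---the $\delta$-invariant argument for $\CL{T}_{\Cl{P}}=\varnothing$ and the explicit mass computations showing $w_C=1$ for cuspidal and reducible cubics---but these only flesh out steps the paper leaves implicit.
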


\subsection{The main example.}\label{subsection:main example}
Firstly,  consider $\Gamma = T$,  where $T=\partial\DD(p, r)$ is a
circle of infinitesimally small radius $0<r<<1$ in $\RR^2$, centered
at $p$. Suppose that $T$ is far from $\mathfrak{S}$,  i.e,  $T$ is
in the complement of some closed disk $\DD^2$ which contains
$\mathfrak{S}$. See Figure \ref{fig:Special position of points and curves}a.
\begin{figure}[htb]
   \centering
      \includegraphics[height=1.2in]{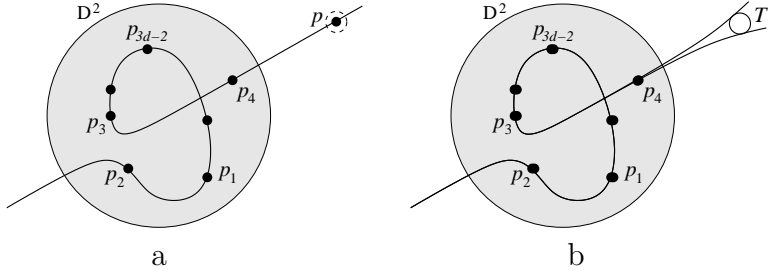}
\vspace{0.1cm}
\centerline{a\hspace{2.1in}b\hspace{0.2in}}
\caption{Changing a point into a circle. 
\label{fig:Special position of points and curves}}
\end{figure}

\noindent Viewing $T$ as a point $p$,  i.e. taking the limit $r\to
0$, we get $3d-1$ generic points $\{p\}\cup\CL{P}$ in the plane. We
have $W_d$ rational nodal curves of degree $d$ passing through
$\{p\}\cup\CL{P}$, counted with their Welschinger's signs. Thus the
algebraic number $N_d(\CL{P}, \Gamma)$ of rational nodal curves
passing through $\CL{P}$ and tangent to $T$ counted with the sign
$\varepsilon_C$ is equal to $2W_d$. Indeed, each rational nodal
curve passing through $\{p\}\cup\CL{P}$ gives $2$ rational nodal
curves passing through $\CL{P}$ and tangent to $T$, see Figure
\ref{fig:Special position of points and curves}b. Moreover, from the
definition of the sign $\tau_C$ we have that $\tau_C=+1$ for any
$C\in \CL{M}_d(\CL{P}, \Gamma)$. Thus in this case
$$N_d(\CL{P}, \Gamma)=\sum\limits_{C\in\Cl{M}_d(\Cl{P}, \Gamma)}
w_C\cdot\tau_C=2\left(\sum\limits_{C\ \text{passes through}\
\{p\}\cup\Cl{P}}w_C\right)=2W_d\ .$$ Reparameterizing the circle $T$
by $\SSS^1\to\SSS^1, \;z\mapsto z^k, \ k\in\ZZ$ (and deforming it
slightly into a general position) we get a curve denoted by $k\cdot
T$ for which we have $\ind(k\cdot T)=k$ and
$$N_d(\CL{P}, \Gamma)=2k\, W_d\ .$$ Since every immersed curve $\Gamma$ is
homotopic in the class of immersions in $\RR^2$ to $k\cdot T$, where
$k=\ind(\Gamma)$, we have that $N_d(\CL{P}, \Gamma)=2W_d\cdot
\ind(\Gamma)$ for a curve $\Gamma$ lying in the complement of some
closed disk of a sufficiently large radius, which contains
$\mathfrak{S}$.

\subsection{The idea of the proof.}
Consider a  solid torus $M=\DD^2\times\SSS^1$,  where $\DD^2$ is a
sufficiently large closed disk containing $\mathfrak{S}$. We will
show that the number $N_d(\CL{P}, \Gamma)$ in  Theorem \ref{thm:main
result} is the intersection number $I(L, \cl{\Sigma};M)$ of an
oriented smooth curve $L$ with a compactification $\cl{\Sigma}$ of
an open two-dimensional surface $\Sigma$ in $M$. The surface
$\Sigma$ is
constructed as follows:\\
For each $p\in\DD^2\minus \mathfrak{S}$, we use a contact element
(line) of curves passing through $\{p\}\cup\CL{P}$ to get $\Sigma$
as a lift of $\DD^2\minus \mathfrak{S}$ into $M$. Lifting $\Gamma$
into $M$ in a similar way we get $L$. The Welschinger's sign $w_C$
gives rise to the orientation on $\Sigma$ and the
orientation of $\Gamma$ defines the orientation of $L$.\\
In order to define the intersection number, we compactify $\Sigma$ to
get a compact surface $\cl{\Sigma}$ with boundary. This is done by
blowing up punctures $\mathfrak{S}$ on $\DD^2$,  i.e.,  we cut out a
small open disk around each puncture and then we lift the remaining
domain into $M$. Due to generality of a pair $( \CL{P}, \Gamma)$,
$L$ transversally intersects $\cl{\Sigma}$ in a finite number of regular 
points of $\cl{\Sigma}$.
Each point $(p, \xi)\in L\pitchfork\cl{\Sigma}$ corresponds to a curve 
passing through $\CL{P}$ and tangent to $\Gamma$. We prove that 
the local intersection number $I_{(p, \xi)}(L,\cl{\Sigma};M)$ equals 
to $\tau_C\cdot w_C$, and thus
$$N_d(\CL{P}, \Gamma)=I(L,\cl{\Sigma};M).$$
Now to get the right hand side of the formula \eqref{eqn:main
formula} we use the homological interpretation of the intersection
number. We take $\Gamma':=\ind(\Gamma)\cdot T$ as in the main
example, see Subsection \ref{subsection:main example},
so $\Gamma'$ is homotopic to $\Gamma$ in the class of
immersions. Hence $[\Gamma]-[\Gamma']=\partial K$ in
$C_1(\DD^2;\ZZ)$ for some 2-chain $K\in C_2(\DD^2;\ZZ)$. Then for
the lifts $L'$ and $\CL{K}$ of $\Gamma'$ and $K$,
respectively, into $M$ we have $[L]-[L']=\partial\CL{K}$ in
$C_1(M;\ZZ)$, and hence
$$I(L, \cl{\Sigma};M)=I(L', \cl{\Sigma};M)+I(\partial\CL{K}, \cl{\Sigma};M).$$
From the main example we obtain
$I(L', \cl{\Sigma};M)=2W_d\cdot \ind(\Gamma')=2W_d\cdot \ind(\Gamma).$
Finally, to complete the proof  we show that
$$I(\partial\CL{K}, \cl{\Sigma};M)=I(\CL{K}, \partial\cl{\Sigma};M)=
2\sum\limits_{p\in\mathfrak{S}}\iota_p
\cdot \ind_p(\Gamma).$$

\begin{rem}\label{rem:sigma4lines}
A simple way to visualize the surface $\Sigma$ is to apply the above
construction to the model example of Section \ref{sub:motivation}.
In this case $\mathfrak{S}$ consists of one point $p$ and the
contact element of any line passing through $p$ is the line itself, so 
the surface is a helicoid, see
Figure \ref{fig:Compactification}.
\end{rem}

\section{The proof of the main result.}

The manifold of oriented contact elements (directions) of the plane
is $\ST^*\RR^2$, the spherization of the cotangent bundle of the
plane. We fix an orientation $o_{\St^*\rr^2}=o_{\rr^2}\times
o_{\SSS^1}$ on $\ST^*\RR^2$, where $o_{\SSS^1}$ is the standard
counterclockwise orientation on $\SSS^1$.

\subsection{Construction of $M, \Sigma, \cl{\Sigma},  L$.}
\paragraph{\textbf{Construction of $\Sigma$}.}
Consider a $(3d-2)$-tuple $\CL{P}=\{p_1,\ldots, p_{3d-2}\}$ of
points in $\RR^2$ in general position. Recall that
$\mathfrak{S}:=\CL{P}\cup\CL{C}_{\Cl{P}}\cup\CL{R}_{\Cl{P}}$,
see Subsection \ref{subsect:generalposition}. Let
$S:=\RR^2\minus \mathfrak{S}$. 
Define $\Sigma\subset \ST^*(\RR^2\minus \mathfrak{S})$ by
\begin{equation}\nonumber\begin{aligned}
&\Sigma:=\left\{(p,\xi)\ \vline
\begin{array}{l}
\text{there is a rational degree $d$ curve passing through }\\ 
\{p\}\cup\CL{P} \text{ with $\xi$ as a tangent direction at a point}\  p
\end{array}
\right\}
\end{aligned}\end{equation}
Denote by $\pi:\Sigma\to S$ the natural projection $\pi((p, \xi))=p$.

\begin{prop}\label{prop:Sigma}
The set $\Sigma$ is a (non-compact) immersed orientable two-dimensional 
surface in $\ST^*\RR^2$. 

\end{prop}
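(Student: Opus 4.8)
The plan is to realize $\Sigma$ as the image of a configuration-space map and then read off smoothness and orientability from the structure of that map. Fix the space $\CL{R}_d^{\CL{P}}$ of (parametrized or unparametrized) irreducible rational degree $d$ curves in $\CC\PP^2$ that pass through $\CL{P}$; over $\CC$ this is a smooth quasi-projective variety of dimension $3d-1-(3d-2)=1$, and its real locus $\CL{R}_d^{\CL{P}}(\RR)$ is a one-dimensional real manifold away from the curves with worse-than-nodal singularities. A point $p\in S$ together with a curve $C$ through $\{p\}\cup\CL{P}$ which is nonsingular at $p$ determines the oriented contact element $\xi$ = (unoriented) tangent line of $C$ at $p$, spherized. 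So I would define a map $\Phi$ from the incidence variety $\{(C,p): C\in\CL{R}_d^{\CL{P}}(\RR),\ p\in C\cap S,\ C \text{ smooth at } p\}$ to $\ST^*\RR^2$, $(C,p)\mapsto(p,\xi)$, whose image is exactly $\Sigma$. The source is a smooth surface: it fibers over $\CL{R}_d^{\CL{P}}(\RR)$ (one-dimensional) with one-dimensional fibers (the points of $C$), and the general-position hypotheses 1--3 together with the definition of $\CL{D}(\CL{P})$ guarantee that over $S$ every relevant $C$ is irreducible nodal with its nodes outside $S$, so the fiber is a smooth curve there.

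Next I would check that $\Phi$ is an immersion. Its differential has two contributions: moving $p$ along a fixed $C$ (which moves the contact element along the Legendrian lift of $C$ to $\ST^*\RR^2$, nonzero since $C$ is smooth) and varying $C$ in the pencil-like family $\CL{R}_d^{\CL{P}}(\RR)$ through $p$ (which moves $\xi$ within the fiber $\ST^*_p\RR^2$ unless the family is tangent to $C$ at $p$ to high order). General position --- in particular that through a generic $p\in S$ there pass finitely many such curves, each nodal, and that the universal curve is smooth --- shows these two directions are independent, so $\Phi$ is an immersion; its image $\Sigma$ is therefore an immersed smooth surface. (Self-intersections of $\Sigma$ occur over points $p$ lying on two distinct curves through $\{p\}\cup\CL{P}$, or where a single curve returns to the same contact element; these are transverse by genericity, which is all ``immersed'' requires.)

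For orientability --- the subtle part --- I would use the Welschinger sign. The real curves through $3d-1$ generic points are nodal, and $w_C=(-1)^{m(C)}$ is locally constant on $\CL{R}_d^{\CL{P}}(\RR)$ away from the discriminant points $\mathfrak{S}$, but more is true: as one moves $p$ across $S$, the curve $C=C(p)$ varies and $m(C(p))$ can change only when $C(p)$ acquires a non-nodal degeneration, i.e. when $p$ hits $\CL{C}_{\CL{P}}\cup\CL{R}_{\CL{P}}$ or when $p$ approaches a node --- all excluded from $S$ (nodes lie in $\RR^2\minus\mathfrak{S}$ but the relevant statement is that on each sheet of $\Sigma$ the mass is locally constant). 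The point is that $\Sigma$, as a surface mapping to $\ST^*\RR^2$, carries a canonical co-orientation/orientation built from: the orientation $o_{\rr^2}$ of the base at $p$, and the sign with which the pencil direction crosses the contact element, twisted by $w_C$. Concretely I would orient each local sheet of $\Sigma$ so that the local intersection number with a lifted oriented curve works out to $\tau_C\cdot w_C$ as demanded in the proof sketch; one then checks that along any loop in a connected component of $S$, transporting this orientation around returns the same sign, because the monodromy of the one-dimensional family $\CL{R}_d^{\CL{P}}(\RR)$ over that loop changes the ``pencil-crossing'' sign by exactly the parity of the change in $m(C)$, and these cancel.

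The main obstacle I anticipate is precisely this orientability/consistency check: one must verify that the naive sheet-wise orientation glues, i.e. that there is no monodromy obstruction as $C$ is transported around loops in $S$ along which the family of real rational curves through $\CL{P}$ may undergo a nontrivial permutation or a wall-crossing in the mass $m(C)$. I would handle it by the above parity argument --- showing the ``geometric'' orientation (from $o_{\rr^2}$ and the transverse pencil direction) and the ``algebraic'' twist $(-1)^{m(C)}$ have compensating monodromies --- which simultaneously pins down the orientation of $\Sigma$ that will be used in computing $I(L,\cl\Sigma;M)$ later. The remaining steps (finiteness of sheets over a compact disk, smoothness of the universal curve over $S$) are routine consequences of general-position conditions 1--3 and standard facts about the Severi-type variety $\CL{R}_d^{\CL{P}}$.
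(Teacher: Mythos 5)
Your overall strategy — realize $\Sigma$ as a lift of contact elements parametrized by the universal curve over the one-dimensional (Severi-type) family $\CL{R}_d^{\CL{P}}(\RR)$, and orient by twisting a geometric co-orientation by the Welschinger sign — is the same as the paper's, except the paper works purely locally: for each $(p,\xi)$ it embeds $C_0$ in a one-parameter family $C_t$ through $\CL{P}$ and analyses the resulting leaf directly. The smoothness/immersion part of your argument is fine and routine, as you say.

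The gap is in the orientability discussion. Your claim that ``$m(C(p))$ can change only when $C(p)$ acquires a non-nodal degeneration, i.e.\ when $p$ hits $\CL{C}_{\Cl{P}}\cup\CL{R}_{\Cl{P}}$'' is false: the cusp (or the intersection of two components) need not be at $p$. As $p$ moves inside $S$ it can cross a cuspidal curve $C_0$ of the family at any of its smooth points, and there the mass of $C(p)$ jumps while $p$ stays in $S$. So ``on each sheet the mass is locally constant'' does not hold across such a wall, and the Welschinger sign alone does not extend to an orientation of $\Sigma$ there without further input. The content you need — and what the paper supplies, citing Welschinger's Proposition 2.16 — is the structural fact that at a cuspidal $C_0$ the two subfamilies $C^\pm$ foliate the \emph{same} side of $C_0$, so that $\Sigma$ has a genuine fold along the lift of $C_0$; combined with $w^-=-w^+$, the normal vector field defined by the sign (pointing into the positive $\SSS^1$-direction when $w=+1$) passes continuously through zero along the fold and hence orients all of $\Sigma$. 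Your closing remark about ``compensating monodromies'' gestures at exactly this cancellation, but as written it is an assertion, not an argument: it neither exhibits the fold nor invokes the Welschinger sign-flip that makes the two effects match. To repair the proposal you should delete the incorrect localization of the mass-change to $\CL{C}_{\Cl{P}}\cup\CL{R}_{\Cl{P}}$ and replace the monodromy hand-wave with the local fold analysis (or at least an explicit reference to the sign-flip and fold structure at cuspidal members of the family).
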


\begin{proof}
Consider an arbitrary $p\in S$ and choose a branch of a curve 
$C_0$ passing through $p$. Lifting the point $p$ using the tangent 
direction $\xi$ of this branch, we get a point in $\PT^*\RR^2$ 
which gives a pair $(p, \pm\xi)$ of points in the double 
covering $\ST^*\RR^2$ of $\PT^*\RR^2$.
Include the given curve $C_0$ into a smooth 1-parametric family 
$C_t$, $t\in(-\eps,\eps)$, $\eps<<1$ of rational curves of degree 
$d$ passing through $\CL{P}$. 
Since $p\notin\mathfrak{S}$, in a small neighborhood $U$ 
of $p$ the corresponding family of contact elements smoothly 
depends on the point of contact. 
The lift of this family of contact elements to $\Sigma$ gives 
a smooth leaf $\Sigma(C_t)$ of $\Sigma$ in a neighborhood 
of $(p,\xi)$. 
The topological structure of this smooth leaf $\Sigma(C_t)$ 
depends on the type of the curve $C_0$.
  
If the initial curve $C_0$ is not cuspidal, the family $C_t$ foliates 
$U$, so the projection $\pi$ of $\Sigma(C_t)$ on $U$ is a diffeomorphism.
See Figure \ref{fig:surface}a.
Note that the Welschinger's sign $w$ of all curves $C_t$ in the family 
is the same. This allows us to define a local orientation of $\Sigma(C_t)$ 
as follows.
It suffices to define a continuous field $\nu$ normal to $\Sigma(C_t)$. 
Since  $T_x\Sigma(C_t)\pitchfork T_x\SSS^1$ at each point $x\in\Sigma(C_t)$, 
such a normal vector $\nu_x$ is determined by its projection to
$T_x\SSS^1$. Recall, that we have already fixed the
orientation $o_{\SSS^1}$ on the fiber $\SSS^1$ of $\ST^*\RR^2$. 
We set the direction of $T_x\SSS^1$-component of $\nu_x$ in the 
direction of $o_{\SSS^1}$ if $w=+1$, and opposite to this
direction if $w=-1$.  

\begin{figure}[ht]
    \centering
    \includegraphics[width=5in]{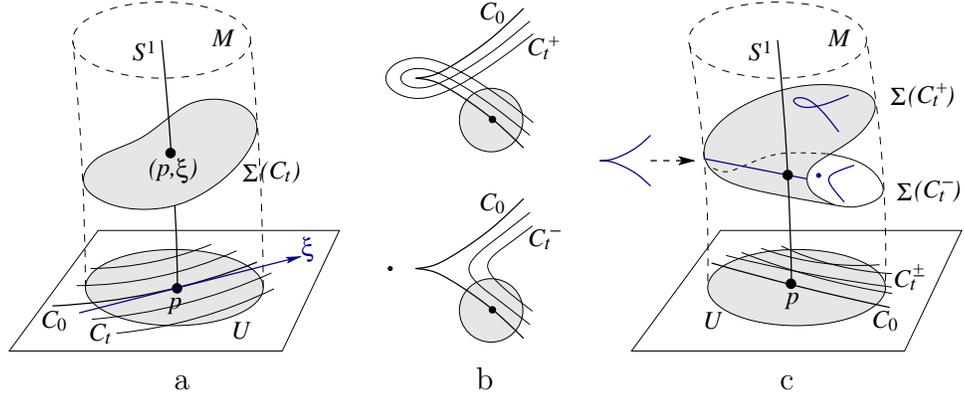}
\vspace{-0.1cm}
\centerline{a\hspace{1.5in}b\hspace{1.5in}c}
\vspace{-0.3cm}
\caption{Smooth sheets and folds of $\Sigma$.\label{fig:surface}}
\end{figure}

If $C_0$ is cuspidal, both subfamilies $C^-=\{C_t| t\in(-\eps,0)\}$ 
and $C^+=\{C_t| t\in(0,\eps)\}$ foliate the same region of 
$U\minus C_0$, with Welschinger's sign $w^\pm$ of all curves in 
each subfamily $C^\pm$ being the same and $w^-=-w^+$, 
see \cite[Proposition 2.16]{W1}. 
The corresponding leaf $\Sigma(C_t)$ of $\Sigma$ has the structure 
of a fold, see Figure \ref{fig:surface}b. 
Two (open) sheets $\Sigma^\pm(C_t)$ of this fold correspond to the 
lift of contact elements of curves in the subfamilies $C^\pm$. Since 
curves in the subfamilies $C^\pm$ have opposite Welschinger's signs 
$w^\pm$, we define the $T_x\SSS^1$-component of $\nu_x$ as 
above for $x\in\Sigma^\pm(C_t)$ and extend $\nu$ continuously 
(with $T_x\SSS^1$-component being zero) to the fold of $\Sigma(C_t)$
(i.e. the lift of $C_0$).
\end{proof}

The topological structure of $\Sigma$ in a small neighborhood of 
$(p, \xi)$ depends on the type of the point $p$. Namely, if $p$ is generic
then $3d-1$ points $\{p\}\cup\CL{P}$ are in general position and 
define a real general $\frac{(d-1)(d-2)}{2}$-dimensional space of 
curves of degree $d$ that contains $N_d(\RR)$ irreducible rational 
nodal curves which intersect transversely in $p$. All of these 
curves pass through $p$ with different tangent directions. Thus 
above a small neighborhood of a generic point $p$ the surface 
$\Sigma$ has a structure of a smooth $2N_d(\RR)$-covering.
The same covering structure appears if $p$ lies on a curve 
with a triple point or tacnode (but is different from a tacnode, 
$p\notin\CL{T}_{\Cl{P}}$). 
Note that while the number $2N_d(\RR)$ of sheets over $p$ depends 
on $\CL{P}$ and $p$, the  number of sheets counted with their 
orientations (i.e., the local degree of $\pi:\Sigma\to S$ at a regular 
value $p$) does not depend on $\{p\}\cup\CL{P}$ and equals $2W_d$.

If some branches of curves which pass through $\{p\}\cup\CL{P}$ 
have the same tangent directions in $p$ (in particular, if $p$ is a 
tacnode), the corresponding lift has the structure of an open book, 
and sheets of the book come in pairs with each pair forming a smooth 
surface, see Figure \ref{fig:singularities of surface}a. The same open 
book structure appears if $p$ lies on a reducible curve or a curve with 
a node at some $p_i\in\CL{P}$, see Figure  \ref{fig:singularities of surface}b.

\begin{figure}[ht]
    \centering
    \includegraphics[width=5in]{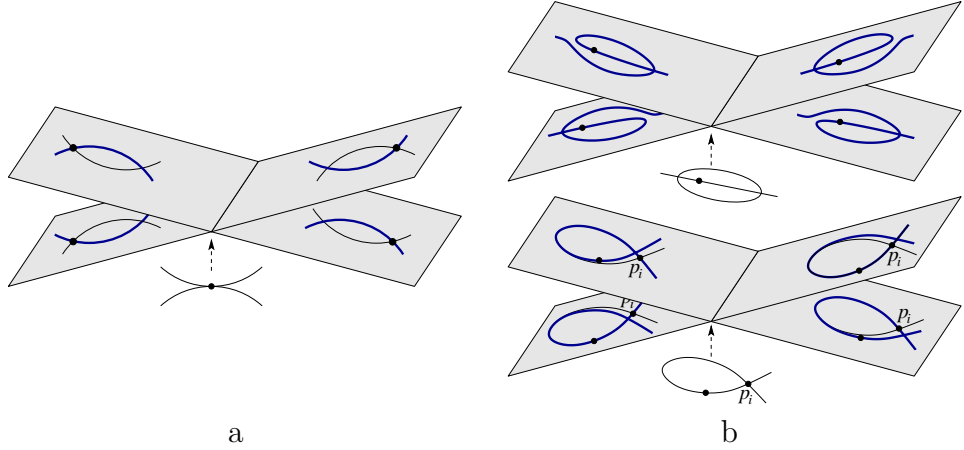}
\vspace{-0.1cm}
\centerline{a\hspace{2.5in}b}
\vspace{-0.3cm}
\caption{Branch points of $\Sigma$.\label{fig:singularities of surface}}
\end{figure}

\begin{rem}
We have to cut out points of $\mathfrak{S}$ from $\RR^2$ in
the construction of $\Sigma$ above since $\Sigma$ does not 
extend to an immersed surface over $\mathfrak{S}$. 
Indeed, if $p\in\CL{C}_{\Cl{P}}\cup\CL{R}_{\Cl{P}}$, tangent 
directions to curves in 1-parametric families $C_t$ used in the 
proof of Proposition \ref{prop:Sigma} do not change smoothly 
in a neighborhood of $p$ (see Figure \ref{fig:fields1}b,c). 
If $p\in\CL{P}$, the obstacle is different: there are infinitely 
many tangent directions of curves passing through $p$.
Note, that while points in $\CL{T}_{\Cl{P}}$ are also singular 
points of curves in $\CL{D}(\CL{P})$, there is no need to cut them 
out from $\RR^2$ since the corresponding tangent directions to 
curves in $C_t$ change smoothly in a neighborhood of a tacnode 
or a triple point, and the number of tangent directions in $p$ is finite. 
\end{rem}

\paragraph{\textbf{Compactification of $\ST^*\RR^2$ and $\Sigma$.}
\label{subsect:compactification}}

In order to use the intersection theory,  we need to compactify both
the open manifold $\ST^*\RR^2$, and the non-compact surface $\Sigma$
with punctures over $\mathfrak{S}$.\\

Let $\DD^2:=\cl{\DD(0, R)},  R>>1$ be a closed disk in the plane
$\RR^2$, centered at the origin and of a sufficiently large radius, 
such that $\mathfrak{S}\subseteq \cl{\DD(0,R/2)}\subseteq\DD^2$. 
Define $M:=\ST^*\DD^2=\DD^2\times\SSS^1.$

Let us choose $0<\delta<<1$ sufficiently small, such that

\be
\item[1.]$\cl{\DD(p, \delta)}\cap\cl{\DD(q, \delta)}=\varnothing$ for all
$p\neq q\in \mathfrak{S}$,
\item[2.]$\cl{\DD(p, \delta)}\cap\Gamma=\cl{\DD(p, \delta)}\cap\partial\DD^2=
\varnothing$ for all $p\in \mathfrak{S}$,
\item[3.]$\cl{\DD(p, \delta)}$ does not contain points but $p$ of mutual 
intersections of all curves  from $\CL{D}(\CL{P})$  for all $p\in \mathfrak{S}$,
\item[4.]$\partial\cl{\DD(p, \delta)}$ intersects transversally every 
$C\in\CL{D}(\CL{P})$ for all $p\in \mathfrak{S}$. These intersections
look as shown in Figure \ref{fig:blowup disks int}.
\ee
\begin{figure}[htb]
    \includegraphics[width=5in]{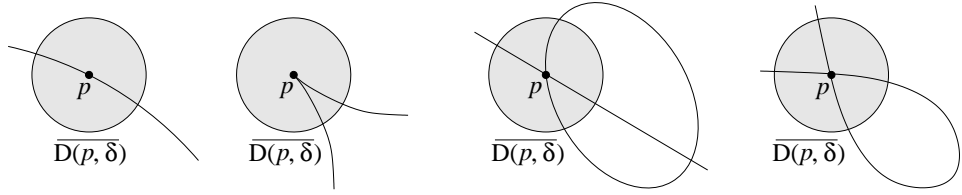}
\caption{Intersections of blowup disks with cubics.\label{fig:blowup disks int}}
\end{figure}

For each $p\in\mathfrak{S}$ we cut out the disk $\DD(p, \delta)$ from
$\DD^2$
and define
$$\bar{S}:=\DD^2\minus\bigcup_{p\in \mathfrak{S}}\DD(p, \delta), \qquad \cl{\Sigma}:=\pi^{-1}(\bar{S})=\Sigma\cap\(\bar{S}\times\SSS^1\).$$
%
%

For all $p\in \mathfrak{S}$ let $\sigma_p:=\Sigma\cap
(\partial\DD(p, \delta)\times\SSS^1)$; it is the union of several
smooth closed simple curves on $\Sigma$. We equip
$\sigma_p$ with the orientation induced from $\Sigma$.
See Figure \ref{fig:Compactification}.

\begin{figure}[ht]
\centerline{\includegraphics[height=1.7in]{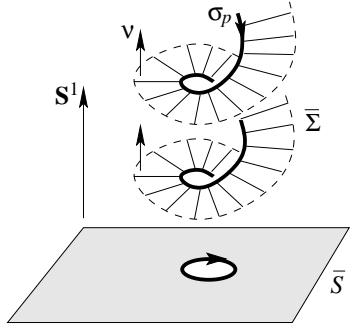}}
\caption{A compactification $\cl{\Sigma}$ of $\Sigma$.
\label{fig:Compactification}}
\end{figure}

\paragraph{\textbf{Construction of $L$}.}
Let $\Gamma = f(\SSS^1)$ be oriented immersed curve,  where
$f:\SSS^1\looparrowright\RR^2$ is an immersion. Choosing the unit
tangent vector to $f(t)$ as the contact element, we get a lift $L$
of $\Gamma$ into $\ST^*\RR^2$:
$$L:=F(\SSS^1), \qquad F:\SSS^1\hookrightarrow \ST^*\RR^2, \qquad t\mapsto\left(f(t),
\frac{f'(t)}{\|f'(t)\|}\right).$$ It follows that $L$ is an oriented
closed one-dimensional submanifold of $\ST^*\RR^2$. 
If $\Gamma$ is generic in the sense of Section \ref{subsect:generalposition}, 
then $L$ intersects $\Sigma$ only in regular points (i.e., points such that 
$\Sigma\cap U$ is diffeomorphic to $\RR^2$ for some neighborhood $U$ 
in $M$).

\subsection{Two ways to calculate the intersection number 
$I(L, \cl{\Sigma};M)$.}
We will calculate the intersection number $I(L, \cl{\Sigma};M)$ in two 
different ways, which will correspond to the LHS and the RHS of
equality \eqref{eqn:main formula}.

\paragraph{\textbf{The intersection number 
$I(L, \cl{\Sigma};M)$ via the algebraic number $N$.}}

Every point $(p, \xi)\in \cl{\Sigma}\cap L$ corresponds to an irreducible
rational nodal curve $C(p, \xi)$,  passing through $\CL{P}$ and tangent to
$\Gamma$ at the point $p$ with the tangent direction $\xi$.
Since $\CL{P}$ and $\Gamma$ are in general position,  we have that $L$
and $\cl{\Sigma}$ intersect transversally,  and since $\dim(L)=1$,
$\dim(\cl{\Sigma})=2$ and $\dim(M)=3$,  we have that
$\dim(L\pitchfork \cl{\Sigma})=0$. So the number of points in
$L\pitchfork\cl{\Sigma}$ is finite. Both $L$ and $\cl{\Sigma}$ are
oriented, as is $M$,  hence the intersection number $I(L, \cl{\Sigma};M)$
is well defined and we have
$$I(L, \cl{\Sigma};M)=\sum_{(p, \xi)\in L\pitchfork\cl{\Sigma}}I_{(p, \xi)}(L, \cl{\Sigma};M), $$
where $I_{(p, \xi)}(L, \cl{\Sigma};M)$ is the local intersection number.

\begin{prop}
For every $(p, \xi)\in L\pitchfork\cl{\Sigma}$ we have
$$I_{(p, \xi)}(L, \cl{\Sigma};M)=\varepsilon_{C(p, \xi)}, $$ where
$\varepsilon_C$ is the sign of the curve $C$,  see Subsection
\ref{subsec:signscubics}.
\end{prop}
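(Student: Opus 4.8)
The plan is to reduce the identity $I_{(p,\xi)}(L,\cl\Sigma;M)=\varepsilon_{C(p,\xi)}=w_C\cdot\tau_C$ to a purely local computation in a small chart of $M=\DD^2\times\SSS^1$ around $(p,\xi)$, where the only data that enter are (a) the orientation of the sheet of $\Sigma$ through $(p,\xi)$, which encodes $w_C$ by Proposition \ref{prop:Sigma}, and (b) the direction in which $L$ crosses that sheet, which encodes $\tau_C$. Since $(p,\xi)\in L\pitchfork\cl\Sigma$ is a regular point of $\cl\Sigma$ (by general position, requirement 6, the tangency is first order and $p\notin\mathfrak S$, so $\Sigma$ is a genuine smooth sheet there, not a fold or a branch point), I may choose local coordinates $(u,v,\theta)$ on $M$ near $(p,\xi)$ with $o_M=o_{\rr^2}\times o_{\SSS^1}$ given by $du\wedge dv\wedge d\theta$, where $(u,v)$ are coordinates on $\RR^2$ near $p$ and $\theta$ is the fiber coordinate on $\SSS^1$ near $\xi$.

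First I would describe the sheet $\Sigma(C_t)$ explicitly: by the construction in Proposition \ref{prop:Sigma}, over the neighborhood $U$ of $p$ the curves $C_t$ foliate $U$ and $\Sigma(C_t)$ is the graph $\theta=\varphi(u,v)$, where $\varphi(u,v)$ is the tangent direction at $(u,v)$ of the unique curve $C_t$ of the family through that point. I would orient this sheet by the normal field $\nu$ of Proposition \ref{prop:Sigma}: its $T\SSS^1$-component points along $o_{\SSS^1}$ if $w_C=+1$ and against it if $w_C=-1$; concretely $\nu=w_C\cdot(-\partial_u\varphi,\,-\partial_v\varphi,\,1)$ up to positive scaling, so the induced orientation of $\Sigma(C_t)$ is $w_C$ times the orientation pulled back from $(u,v)$ via the projection $\pi$. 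Second I would describe $L$ near $(p,\xi)$: it is the graph $t\mapsto(f(t),\theta(t))$ where $\theta(t)$ is the direction of $f'(t)$; its tangent vector at the contact point is $(f'(p),\theta')$. The local intersection number is then the sign of the determinant of the $3\times3$ matrix whose rows are: the tangent to $L$, and two vectors spanning $T\Sigma(C_t)$ compatible with its chosen orientation — equivalently, the sign of $\langle \nu,\, \dot L\rangle$ where $\dot L$ is the velocity of $L$, times $w_C$ coming from how $\nu$ was normalized.

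The heart of the matter — and the step I expect to be the main obstacle — is identifying this sign with $\tau_C$ as defined in Subsection \ref{subsec:signscubics}. The definition of $\tau_C$ involves the normal vector $n$ to $C$ at $p$ pointing toward the side of $\DD(p,r)\setminus C$ containing $\Gamma$, together with the tangent $t$ to $\Gamma$; and $\tau_C=+1$ iff $(t,n)$ is positively oriented. One must show that the quantity $\langle\nu,\dot L\rangle/w_C$, which measures whether $\theta'(p)-\frac{d}{dt}\varphi(f(t))|_p$ is positive or negative — i.e., whether $\Gamma$ "turns faster or slower" than the foliating curve $C_t$ at the contact point — has the same sign as $\tau_C$. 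This is exactly the local picture already worked out in the toy model of Subsection \ref{sub:motivation} (Figure \ref{fig:morse}b): the sign of the difference of curvatures (equivalently, which side of $C$ the curve $\Gamma$ lies on near $p$) is precisely what distinguishes the two pictures in that figure, and the compatibility of the $\theta$-direction with $o_{\SSS^1}$ translates the "faster/slower turning" dichotomy into the "$(t,n)$ positively/negatively oriented" dichotomy. I would make this precise by a direct second-order Taylor computation: writing both $C$ and $\Gamma$ as graphs over their common tangent line at $p$, the side of $C$ containing $\Gamma$ is read off from the sign of the difference of the second derivatives, and the same sign controls $\theta'(p)-\frac{d}{dt}\varphi(f(t))|_p$ since $\varphi$ restricted along $C$ is constant while along a transversal it varies with slope governed by the curvature of $C$. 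Assembling the two contributions gives $I_{(p,\xi)}(L,\cl\Sigma;M)=w_C\cdot\tau_C=\varepsilon_{C(p,\xi)}$, as claimed. A minor bookkeeping point to handle carefully is the consistency of the orientation conventions $o_M=o_{\rr^2}\times o_{\SSS^1}$ with the ordering of rows in the determinant, which I would fix once at the start and track through.
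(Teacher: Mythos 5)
Your approach is the same as the paper's, just unpacked into explicit coordinates: the paper's proof is only three sentences long, orienting $T_{(p,\xi)}\cl{\Sigma}$ by $w_C$ and reading off from a figure that $L$ crosses $\cl{\Sigma}$ in the positive fiber direction iff $\tau_C=+1$; your graph model $\theta=\varphi(u,v)$, normal field $\nu$, and curvature comparison make that figure precise. One clause in your justification is wrong, though it does not affect the conclusion: $\varphi$ restricted along $C$ is \emph{not} constant --- its derivative along $C$ is the curvature $\kappa_C$ of $C$, and since $f'(p)$ is tangent to $C$ at $p$, what you actually get is $\tfrac{d}{dt}\varphi(f(t))\big|_p=\kappa_C$, so the relevant quantity is $\theta'(p)-\tfrac{d}{dt}\varphi(f(t))\big|_p=\kappa_\Gamma-\kappa_C$. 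This difference of signed curvatures is exactly what determines which side of $C$ the curve $\Gamma$ occupies, hence $\tau_C$, which is the ``turns faster or slower'' comparison you already stated correctly in the preceding sentence.
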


\begin{proof}
The orientation of $T_{(p, \xi)}\cl{\Sigma}$ is defined by the
Welschinger's sign $w_{C(p, \xi)}$. The curve $L$
intersects $\cl{\Sigma}$ in the direction of the oriented fiber
$F$ iff $\tau_{C(p, \xi)}=+1$, see Figure \ref{fig:tauorientation}.
\begin{figure}[ht]
\centerline{\includegraphics[width=4.1in]{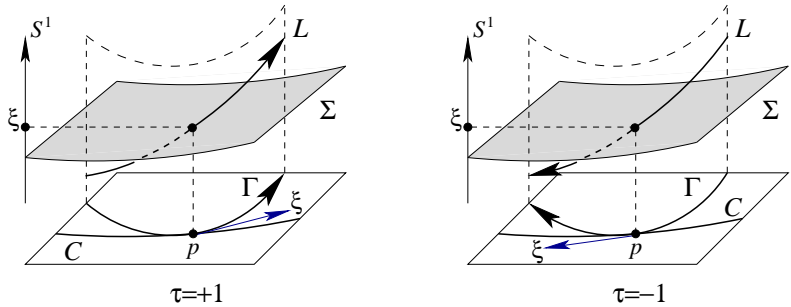}}
\caption{Intersection of $L$ with $\Sigma$.\label{fig:tauorientation}}
\end{figure}
Hence the orientation of $T_{(p, \xi)}\cl{\Sigma}\oplus T_{(p,\xi)}L$ 
differs from the orientation $o_{\St^*\rr^2}\big|_{M}$ of $M$ by the 
sign $\varepsilon_{C(p, \xi)}=w_{C(p,\xi)}\cdot\tau_{C(p, \xi)}$ and 
we get $I_{(p, \xi)}(L,\cl{\Sigma};M)=\varepsilon_{C(p, \xi)}.$
\end{proof}

\begin{cor}
We have \quad $\displaystyle{N_d(\CL{P},
\Gamma)=\sum\limits_{C\in\Cl{M}_d(\Cl{P}, \Gamma)}\varepsilon_C=I(L,
\cl{\Sigma};M)}$
\end{cor}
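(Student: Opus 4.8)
The corollary is almost immediate from the preceding proposition once one additional bookkeeping fact is in place, namely that the assignment $(p,\xi)\mapsto C(p,\xi)$ is a \emph{bijection} between $L\pitchfork\cl{\Sigma}$ and $\CL{M}_d(\CL{P},\Gamma)$. So the plan is: (i) recall what has already been established about $L\pitchfork\cl{\Sigma}$, (ii) prove the bijection, and (iii) chain together the definition of $N_d(\CL{P},\Gamma)$, the bijection, the preceding proposition, and the additivity of the intersection number. Everything beyond step (ii) is formal, and even step (ii) is just a careful reading of the general-position hypotheses.

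For step (i): by the construction of $L$ and the general position of $(\CL{P},\Gamma)$, the curve $L$ meets $\cl{\Sigma}$ transversally in finitely many regular points of $\cl{\Sigma}$, and each $(p,\xi)\in L\pitchfork\cl{\Sigma}$ determines an irreducible rational nodal degree-$d$ curve $C(p,\xi)$ through $\CL{P}$ tangent to $\Gamma$ at $p$ with common tangent direction $\xi$; in particular $C(p,\xi)\in\CL{M}_d(\CL{P},\Gamma)$, and by the preceding proposition $I_{(p,\xi)}(L,\cl{\Sigma};M)=\varepsilon_{C(p,\xi)}$. For step (ii), surjectivity: given $C\in\CL{M}_d(\CL{P},\Gamma)$, general-position item~6 says $C$ is tangent to $\Gamma$ at a unique point $p$ and the tangency is of first order, so $C$ and $\Gamma$ share a tangent direction $\xi$ at $p$; item~5 says $p\notin\mathfrak{S}$, hence $p\in\bar S$, so $(p,\xi)$ lies on both $\cl{\Sigma}$ (being the lift of the tangent direction at $p$ of the curve $C$ through $\{p\}\cup\CL{P}$) and $L$ (being the lift of the tangent direction of $\Gamma$ at $p$), and the intersection being transverse we get $(p,\xi)\in L\pitchfork\cl{\Sigma}$ with $C(p,\xi)=C$. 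Injectivity: if $C(p_1,\xi_1)=C(p_2,\xi_2)=:C$, then $p_1$ and $p_2$ are both tangency points of $C$ with $\Gamma$, so $p_1=p_2$ again by item~6, and then $\xi_1=\xi_2$ is the common tangent direction of $C$ and $\Gamma$ at that point.

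For step (iii) I would simply assemble
\begin{align*}
N_d(\CL{P},\Gamma)
&=\sum_{C\in\Cl{M}_d(\Cl{P},\Gamma)}\varepsilon_C
=\sum_{(p,\xi)\in L\pitchfork\cl{\Sigma}}\varepsilon_{C(p,\xi)}\\
&=\sum_{(p,\xi)\in L\pitchfork\cl{\Sigma}}I_{(p,\xi)}(L,\cl{\Sigma};M)
=I(L,\cl{\Sigma};M),
\end{align*}
where the first equality is the definition of $N_d(\CL{P},\Gamma)$, the second is the bijection of step~(ii), the third is the preceding proposition, and the fourth is additivity of the intersection number over the finitely many transverse points of $L\pitchfork\cl{\Sigma}$. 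I do not expect a genuine obstacle; the one point deserving care is that no curve of $\CL{M}_d(\CL{P},\Gamma)$ is lost when $\Sigma$ is truncated to $\cl{\Sigma}$ — that is, every such tangency occurs over $\bar S$ and not over one of the excised disks $\DD(p,\delta)$ — which is exactly what general-position item~5 (tangency points avoid $\mathfrak{S}$) together with the smallness conditions on $\delta$ guarantee.
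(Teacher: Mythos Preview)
Your argument is correct and is precisely the intended one: the paper states the corollary without a separate proof because it is meant to follow immediately from the preceding proposition together with the already-noted fact that points of $L\pitchfork\cl{\Sigma}$ correspond to curves in $\CL{M}_d(\CL{P},\Gamma)$; you have simply written out this correspondence as an explicit bijection. One small imprecision: your appeal to general-position item~5 to conclude that a tangency point $p$ lies in $\bar S$ is not quite the right citation, since item~5 concerns intersections with curves in $\CL{D}(\CL{P})$ rather than tangencies with nodal curves; the clean reason (which you do invoke at the end) is condition~2 in the choice of $\delta$, namely $\cl{\DD(p,\delta)}\cap\Gamma=\varnothing$ for all $p\in\mathfrak{S}$, so that $L\subset\bar S\times\SSS^1$ and no intersection point is lost in passing from $\Sigma$ to $\cl{\Sigma}$.
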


\paragraph{\textbf{The intersection number 
$I(L, \cl{\Sigma};M)$ via a homological theory.}
\label{subsect:intersectionnumberviahomology}}
Let us take $k\cdot T, \ k=\ind(\Gamma)$ as in Subsection
\ref{subsection:main example} which is regularly homotopic to
$\Gamma$ in $\DD^2$,  and $h:\SSS^1\times [0, 1]\to\DD^2$ be a
homotopy between $k\cdot T$ and $\Gamma$. Denote
$\Gamma_t:=h(\SSS^1\times\{t\}), \ t\in [0, 1]$, so $\Gamma_0=k\cdot
T$ and $\Gamma_1=\Gamma$. Denote by $L_t$ a lift of $\Gamma_t$ to
$M$. Then $L'=L_0, L=L_1$ and  a 2-chain
$\CL{K}:=\{L_t|t\in[0,1]\}\in C_2(M;\ZZ)$ realizes a homotopy
between $L'$ and $L$. We choose an orientation of $\CL{K}$ such that
$\partial\CL{K}=[L]-[L']$. Because of the homotopy invariance of the
intersection number, we may choose a special homotopy $h$ as
follows. For all $p\in\mathfrak{S}$ pick an open neighborhood $U_p$
of $\cl{\DD(p, \delta)}$ and a direction $\xi_p$ transversal to
tangent directions of all curves from $\CL{D}(\CL{P})$ at $p$. See
Figure \ref{fig:flatgamma}a.
\begin{figure}[htb]
   \centering
   \includegraphics[height=2.0in]{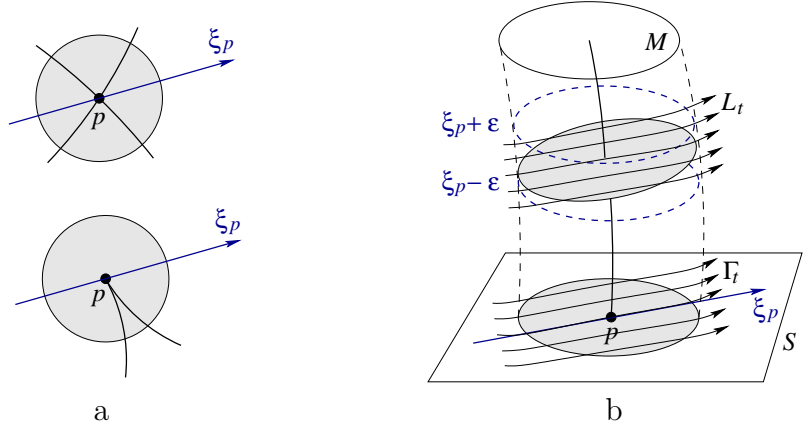}
\vspace{0.1cm}
\centerline{a\hspace{2.6in}b\hspace{0.5in}}
\caption{A flat homotopy of $\Gamma$.\label{fig:flatgamma}}
\end{figure}
Now, choose the homotopy $h$ so that for all $t\in [0, 1]$ with
$\Gamma_t\cap U_p\neq\varnothing$, the fragment $\Gamma_t\cap U_p$
is close to a straight interval in the direction $\xi_p$.
For such a homotopy the part $\CL{K}\cap(U_p\times\SSS^1)$ of 
$\CL{K}$ over $U_p$ is almost flat,  i.e.,  lies in a thin cylinder
$$\CL{K}\cap(U_p\times\SSS^1)\subseteq U_p\times
(\xi_p-\eps, \xi_p+\eps),$$ for some small $0<\eps<<1$.
See Figure \ref{fig:flatgamma}b.

By the additivity of the intersection number and according to the 
calculations in the Subsection \ref{subsection:main example} we have
\begin{multline*}
I(L, \cl{\Sigma};M)=I(L', \cl{\Sigma};M)+I(\partial\CL{K}, \cl{\Sigma};M)= \\
=2W_d\cdot \ind(k\cdot T)+I(\partial\CL{K}, \cl{\Sigma};M)=2W_d\cdot
\ind(\Gamma)+I(\partial\CL{K}, \cl{\Sigma};M)
\end{multline*}
It remains to compute $I(\partial\CL{K},\cl{\Sigma};M)$.

\begin{lem}
For $\cl{\Sigma}, \CL{K}, \sigma_p$ and $L$ as before we have
$$I(\partial\CL{K},\cl{\Sigma};M)=\sum\limits_{p\in \mathfrak{S}}I(\CL{K},\sigma_p;M).$$
\end{lem}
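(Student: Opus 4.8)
The plan is to reduce the computation of $I(\partial\CL{K},\cl{\Sigma};M)$ to a sum of contributions localized near the blowup circles $\sigma_p$, $p\in\mathfrak{S}$. First I would use the fact that $\partial\cl{\Sigma}=\bigsqcup_{p\in\mathfrak{S}}\sigma_p$ (up to orientation), together with the standard duality between the intersection number of a cycle with a bounding chain and the intersection number with the boundary. Concretely, since $\CL{K}$ is a $2$-chain in $M$ with $\partial\CL{K}=[L]-[L']$, and since $\cl{\Sigma}$ is a surface with boundary, one has the general position identity
$$I(\partial\CL{K},\cl{\Sigma};M)=\pm I(\CL{K},\partial\cl{\Sigma};M)=\sum_{p\in\mathfrak{S}} I(\CL{K},\sigma_p;M),$$
the sign being fixed by the compatible choices of orientations $o_{\St^*\rr^2}$ on $M$, the induced orientation on $\partial\cl{\Sigma}$, and the orientation of $\CL{K}$ with $\partial\CL{K}=[L]-[L']$. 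The point is that $\partial(\cl{\Sigma}\cdot\CL{K})=\partial\cl{\Sigma}\cdot\CL{K}\pm\cl{\Sigma}\cdot\partial\CL{K}$ as $0$-chains (Leibniz rule for the intersection product), and since $\cl{\Sigma}\cdot\CL{K}$ is a compact $1$-chain in $M$, its boundary has total degree zero; rearranging gives the claim.

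To make this rigorous I would argue as follows. Choose the homotopy $h$ and hence $\CL{K}$ so that $\CL{K}$ is transverse to $\cl{\Sigma}$ and to $\partial\cl{\Sigma}=\bigsqcup\sigma_p$; this is possible by a small perturbation and does not change any intersection number. Then $\cl{\Sigma}\cap\CL{K}$ is a compact $1$-manifold with boundary, properly embedded in the sense that its boundary lies in $(\partial\cl{\Sigma}\cap\CL{K})\cup(\cl{\Sigma}\cap\partial\CL{K})$. Orienting $\cl{\Sigma}\cap\CL{K}$ via the orientations of $\cl{\Sigma}$, $\CL{K}$ and $M$, the signed count of its boundary points is zero (a compact oriented $1$-manifold has boundary of total degree $0$). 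The boundary points split into those on $\partial\CL{K}=[L]-[L']$, contributing $I(L,\cl{\Sigma};M)-I(L',\cl{\Sigma};M)=I(\partial\CL{K},\cl{\Sigma};M)$ with the appropriate sign, and those on $\partial\cl{\Sigma}$, contributing $\sum_p I(\CL{K},\sigma_p;M)$ with the opposite sign. Equating gives the lemma. One subtlety to address is that $\cl{\Sigma}$ is only an \emph{immersed} surface (with fold and open-book singularities over $\mathfrak{S}$, but these are exactly the points removed); since all intersections with $L$, $L'$ and $\CL{K}$ occur at regular points of $\cl{\Sigma}$ by the general position hypothesis and by the choice of the flat homotopy near each $U_p$, the intersection-product calculus applies locally sheet by sheet, and the global identity follows by summing over sheets.

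The main obstacle I expect is orientation bookkeeping: one must check that the sign in $I(\partial\CL{K},\cl{\Sigma};M)=+\sum_p I(\CL{K},\sigma_p;M)$ (as opposed to a global minus sign) is consistent with the conventions fixed in the paper — namely $o_{\St^*\rr^2}=o_{\rr^2}\times o_{\SSS^1}$, the orientation of $\sigma_p$ \emph{induced from} $\cl{\Sigma}$ (hence the \emph{outward-normal-first} or \emph{last} convention for $\partial\cl{\Sigma}$), and the orientation of $\CL{K}$ chosen so that $\partial\CL{K}=[L]-[L']$. I would pin this down by checking it on the toy helicoid model of Remark \ref{rem:sigma4lines}, where $\mathfrak{S}=\{p\}$, $\Sigma$ is a helicoid, and both sides can be computed by hand; the sign that makes the $d=1$ formula \eqref{eq:N motivation} come out correctly is the one to adopt. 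The remaining steps — transversality of $\CL{K}$ with $\cl{\Sigma}$ and with each $\sigma_p$, and the fact that $\cl{\Sigma}\cap\CL{K}$ is a genuinely compact $1$-manifold (no escape through $\partial M=\partial\DD^2\times\SSS^1$, which holds because $\cl{\DD(p,\delta)}\cap\partial\DD^2=\varnothing$ and $\Gamma,k\cdot T$ lie in the interior) — are routine once the setup is arranged as above.
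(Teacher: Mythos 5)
Your argument is essentially the paper's, with one inaccuracy in the setup and with the key cited identity unpacked. The paper simply invokes $I(\partial\CL{K},\cl{\Sigma};M)=I(\CL{K},\partial\cl{\Sigma};M)$ as known, then writes $\partial\cl{\Sigma}$ as the sum of $\sum_p\sigma_p$ \emph{and} the piece $\partial\cl{\Sigma}\cap(\partial\DD^2\times\SSS^1)$, and finishes by noting $\CL{K}\cap(\partial\DD^2\times\SSS^1)=\varnothing$. Your opening claim that $\partial\cl{\Sigma}=\bigsqcup_p\sigma_p$ (up to orientation) is therefore not literally correct: $\cl{\Sigma}=\pi^{-1}(\bar{S})$ also has boundary circles sitting over $\partial\DD^2$. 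You recover from this later — your ``no escape through $\partial M=\partial\DD^2\times\SSS^1$'' observation is exactly the paper's $\CL{K}\cap(\partial\DD^2\times\SSS^1)=\varnothing$, and it kills the contribution of the outer boundary component — but it would be cleaner to state the full boundary decomposition up front, as the paper does, rather than assert an equality that needs a hidden correction.

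Beyond that slip, your Leibniz-rule/compact-$1$-manifold argument is a correct and standard proof of the Stokes-type identity the paper cites, and your remarks about transversality of $\CL{K}$ with $\cl{\Sigma}$ and $\sigma_p$, the orientation bookkeeping, and the fact that all intersections occur at regular points of the immersed $\cl{\Sigma}$ address real (if routine) points the paper glosses over. The sign check against the $d=1$ helicoid model is a reasonable way to pin down the orientation conventions. In short: same route as the paper, with one imprecise statement of $\partial\cl{\Sigma}$ that your later argument silently repairs, and with the cited intersection-theory identity spelled out rather than assumed.
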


\begin{proof}
Recall  that $I( \partial\CL{K},\cl{\Sigma};M)=I(\CL{K},\partial\cl{\Sigma};M)$.
Now,  as a 1-chain in $M$, $$\partial\cl{\Sigma}=
\partial\cl{\Sigma}\cap(\partial\DD^2\times\SSS^1))+\sum_{p\in \mathfrak{S}}\sigma_p.$$
Since $\CL{K}\cap(\partial\DD^2\times\SSS^1)=\varnothing$,  we get
$I(\CL{K},\partial\cl{\Sigma};M)=\sum\limits_{p\in \mathfrak{S}}I(\CL{K},\sigma_p;M)$.
\end{proof}

The following proposition completes the proof of the main theorem.

\begin{prop}
For every $p\in\mathfrak{S}$ we have
$$I(\CL{K},\sigma_p;M)=2\iota_p\cdot \ind_p(\Gamma).$$
\end{prop}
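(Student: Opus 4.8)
The plan is to reduce the computation of $I(\CL{K},\sigma_p;M)$ to a purely local count near each puncture $p\in\mathfrak{S}$, exploiting the flatness of $\CL{K}$ over $U_p$. First I would observe that, because of the special homotopy $h$, the 2-chain $\CL{K}$ inside $U_p\times\SSS^1$ is contained in the thin slab $U_p\times(\xi_p-\eps,\xi_p+\eps)$, where $\xi_p$ is transversal to all tangent directions of curves of $\CL{D}(\CL{P})$ at $p$. Thus $\CL{K}\cap(U_p\times\SSS^1)$ meets $\sigma_p$ only along the sheets of $\sigma_p$ whose $\SSS^1$-coordinate is near $\xi_p$; these are finitely many arcs. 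The key point is that, as a chain, $\CL{K}\cap(U_p\times\SSS^1)$ is (up to a boundary supported away from $\sigma_p$) homologous to $K_p\times\{\xi_p\}$, where $K_p$ is the 2-chain in $U_p$ realizing the homotopy of the flattened curve, so that $I(\CL{K},\sigma_p;M)$ equals the intersection number, \emph{in the plane}, of $K_p$ with the projection $\pi(\sigma_p\cap(U_p\times\{\xi_p\}))$ — weighted by the local orientation coefficient of $\sigma_p$, which is exactly $w_C$ by the construction of the orientation of $\Sigma$ in Proposition \ref{prop:Sigma}. Since $\partial K_p$ records $\Gamma$ near $p$ (minus the irrelevant $k\cdot T$ piece, which is far from $\mathfrak{S}$), the intersection number of $K_p$ with a point-set in the plane is, by the very definition recalled in Subsection \ref{sub:motivation}, the winding number $\ind_p(\Gamma)$ times the number of those points counted with signs.

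Next I would identify the signed count $\pi(\sigma_p\cap(U_p\times\{\xi_p\}))$ with the weighted number of curves in $\CL{D}(p)\cup\{\text{the curve through }p\in\CL{C}_{\Cl{P}}\cup\CL{R}_{\Cl{P}}\}$, together with a global contribution of the smooth $2N_d(\RR)$-sheeted part of $\Sigma$. Concretely, I would treat the three cases separately. For $p\in\CL{C}_{\Cl{P}}$: near the puncture $\Sigma$ has one fold (Figure \ref{fig:surface}b) coming from the cuspidal curve $C$, whose two sheets carry opposite signs $w^\pm=-w_C$ and $+w_C$; a generic direction $\xi_p$ hits exactly one of them, contributing $-w_C$ (the sign convention is pinned down by Figure \ref{fig:blowup disks int}), which matches $\iota_p=-w_C$. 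For $p\in\CL{R}_{\Cl{P}}$: $\Sigma$ has an open-book/branch structure (Figure \ref{fig:singularities of surface}b) coming from the reducible curve, and the analogous local count gives $+w_C=\iota_p$. For $p\in\CL{P}$: here $\Sigma$ has infinitely many sheets over $p$ but the winding/index computation still localizes; the generic $\xi_p$ is hit by the $2N_d(\RR)$ smooth sheets minus the corrections coming from the open-book sheets over the curves of $\CL{D}(p)$ with a node at $p$; using that the signed count of all smooth sheets of $\Sigma$ over a nearby generic point is the local degree $2W_d$, and that each curve in $\CL{D}(p)$ replaces a genuine smooth double sheet by a book page, one gets the signed sheet-count to be $-W_d+2\sum_{C\in\Cl{D}(p)}w_C$, which is $\iota_p$. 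In all three cases the factor of $2$ comes from the double covering $\ST^*\RR^2\to\PT^*\RR^2$: each unoriented contact element lifts to a pair $(p,\pm\xi)$, so $\xi_p$ and $-\xi_p$ each contribute, doubling the count, exactly as in the main example of Subsection \ref{subsection:main example}.

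I would then assemble these: $I(\CL{K},\sigma_p;M)=2\iota_p\cdot\ind_p(\Gamma)$, summing over $p\in\mathfrak{S}$ and combining with the already-established identities $I(L,\cl{\Sigma};M)=I(L',\cl{\Sigma};M)+I(\partial\CL{K},\cl{\Sigma};M)$, $I(L',\cl{\Sigma};M)=2W_d\ind(\Gamma)$, and $I(\partial\CL{K},\cl{\Sigma};M)=\sum_p I(\CL{K},\sigma_p;M)$, yields formula \eqref{eqn:main formula}. The homotopy invariance assertion of Theorem \ref{thm:main result} then follows since $I(L,\cl{\Sigma};M)$ is a homotopy invariant of $L$ rel the configuration, and crossing $\Delta$ is precisely what changes $\cl{\Sigma}$ or the homotopy class of $L$.

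The main obstacle, I expect, is the careful sign bookkeeping near the punctures: verifying that the induced orientation on $\sigma_p$ from $\Sigma$, combined with the local model of how the sheets of $\Sigma$ wrap around $\partial\DD(p,\delta)$ (Figure \ref{fig:blowup disks int}), produces exactly the coefficients $-w_C$, $+w_C$, and $-W_d+2\sum w_C$ with the correct signs rather than their negatives — and, in the $p\in\CL{P}$ case, making rigorous the "infinitely many sheets but finitely many hit $\xi_p$" argument, i.e. justifying that the winding-number computation genuinely localizes despite $\Sigma$ failing to extend over $p$. Everything else (flatness of $\CL{K}$, reduction to a planar intersection number, the factor $2$ from the double cover) is routine once the local picture is fixed.
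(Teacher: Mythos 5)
Your overall strategy follows the paper's: reduce to a local count near each puncture, exploit the almost-flat form of $\CL{K}$ over $U_p$, count sheets of $\sigma_p$ at a fixed direction with Welschinger signs, and attribute the factor $2$ to the double cover $\ST^*\RR^2\to\PT^*\RR^2$. The paper's formulation is cleaner — it first computes the homology class $[\sigma_p]=k_p[F]$ as the degree $\deg G_p$ of the projection $\sigma_p\to F$, reducing the claim to $k_p\cdot I(\CL{K}_p,\{p\}\times\SSS^1;M)=k_p\cdot\ind_p(\Gamma)$, whereas you flatten $\CL{K}_p$ to $K_p\times\{\xi_p\}$ and intersect in the plane. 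These are dual reductions and both are legitimate; the content is the same.

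There is, however, a genuine gap in your case $p\in\CL{P}$. You assert that ``the generic $\xi_p$ is hit by the $2N_d(\RR)$ smooth sheets'' and that subtracting corrections from curves in $\CL{D}(p)$ produces $-W_d+2\sum_{C\in\Cl{D}(p)}w_C$. This confuses two different degrees: the degree of $\pi:\Sigma\to S$ over a generic nearby point (which \emph{is} $2W_d$) and the degree of $G_p:\sigma_p\to F$ (which is what you actually need). For a curve $C$ whose tangent direction at $p$ is generic, the tangent direction at $q$ is almost constant as $q$ runs around $\partial\DD(p,\delta)$, so that sheet contributes \emph{zero} to the winding around $F$ — generic sheets simply do not hit $\xi_p$. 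Likewise, the paper's Figure~\ref{fig:fields2}a shows that the curves $C\in\CL{D}(p)$ (node at $p$) also contribute nothing. The entire degree $\deg G_p$ in this case comes from the curves $C(p,\xi)$ that pass through $\CL{P}$ with a \emph{fixed tangent direction} at $p$ (Figure~\ref{fig:fields2}b), each contributing $-2w_{C(p,\xi)}$, and then one must invoke Welschinger's identity $\sum_{C(p,\xi)} w_{C(p,\xi)}+2\sum_{C\in\Cl{D}(p)}w_C=W_d$ (from \cite[Prop.~3.4]{W1}) to convert $-2\sum_{C(p,\xi)} w_{C(p,\xi)}$ into $2\iota_p$. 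Without that algebraic input your bookkeeping cannot close: starting from $2W_d$ and subtracting contributions of curves in $\CL{D}(p)$ does not yield $-W_d+2\sum w_C$ by any sheet-replacement argument. The $p\in\CL{C}_{\Cl{P}}$ and $p\in\CL{R}_{\Cl{P}}$ cases of your sketch are essentially right (two preimages per puncture, with local degree $\iota_p$ each), but the $p\in\CL{P}$ case as written would fail.
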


\begin{proof}
Firstly, recall that $\ST^*\DD^2\to \PT^*\DD^2$ is a $2$-fold
covering, so for every component of the lift of
$\partial\DD(p,\delta)$ to $\PT^*\DD^2$ there are two components in
$\ST^*\DD^2$, which explains the coefficient $2$ in the RHS.
Secondly, note that $I(\CL{K},\sigma_p;M)=I(\CL{K}_p,\sigma_p;M)$
for every $p\in\mathfrak{S}$, where
$\CL{K}_p:=\CL{K}\cap(U_p\times\SSS^1).$
\\
In order to compute $I(\CL{K}_p,\sigma_p;M)$ we study the homology 
class $[\sigma_p]\in\mathrm{H}_1(M;\ZZ)$ of $\sigma_p$. 
Since $\mathrm{H}_1(M;\ZZ)=\ZZ\langle [F]\rangle$, where $[F]$ 
is the class of the fiber, we conclude that $[\sigma_p]=k_p\cdot [F]$ 
for some $k_p\in\ZZ$. 
The number $k_p$ is the degree $\deg G_p$ of the
corresponding projection map $G_p:\sigma_p\to F$ to the 
fiber $F$ of $M$. It can be computed as the algebraic number of 
preimages  $(G_p)^{-1}(\xi)$ of a regular value $\xi$.
Each preimage $(q,\xi)\in\sigma_p$ is counted with its 
sign -- the local degree $\deg_{(q,\xi)}G_p$ of $G_p$ at $(q,\xi)$.

Both preimages and their signs can be recovered from the 
projection $\pi:\sigma_p\to\partial\DD(p,\delta)$.
Indeed, a preimage $(q,\xi)\in\sigma_p$ corresponds to the point 
$q\in\partial\DD(p,\delta)$ and the curve $C(q, \xi)$ passing
through $q\cup\CL{P}$ and having a tangent direction $\xi$ at $q$.
Moreover, the orientation of $T_{(q, \xi)}\sigma_p$ is induced from 
that of $\Sigma$ which, in turn, is defined by the Welschinger's 
sign $w_{C(q, \xi)}$.    
Thus the orientation of the projection $\pi:\sigma_p\to\partial\DD(p,\delta)$ 
at $q$ differs from the clockwise orientation on $\partial\DD(p,\delta)$  
by $w_{C(q, \xi)}$ (see Figure \ref{fig:Compactification}) . 
Therefore, the local degree $\deg_{(q,\xi)}G_p$ 
equals to $w_{C(q, \xi)}\cdot\rho_q$, where $\rho_q=1$  
(resp. $\rho_q=-1$) if the field of tangent directions of curves 
corresponding to $\sigma_p$ rotates counter-clockwise (resp. 
clockwise) w.r.t. $\xi$ as we move clockwise along $\partial\DD(p,\delta)$ 
in a neighborhood of $q$. 

To find the corresponding curves $C(q, \xi)$, note that any curve
passing through $q\cup\CL{P}$ for $q\in\partial\DD(p,\delta)$ is
obtained by a small deformation of some rational curve $C_p$ of 
degree $d$ in the following finite set:
\newpage
\be
\item[(i)] If $p\notin\CL{P}$, then $C_p$ passes through $3d-1$ 
              points $p\cup\CL{P}$.
\item[(ii)] If $p\in\CL{P}$, then $C_p$ passes through $3d-2$ points $\CL{P}$ 
and either has a node at $p$, or has a tangent direction $\xi$ at $p$.
\ee
Consider these cases separately using the standard methods of 
singularity theory.

\noindent{\bf Case 1: $p\notin\CL{P}$.} If $C_p$ is nodal, its small 
deformation is shown in Figure \ref{fig:fields1}a.  For sufficiently small 
$\delta$, the corresponding tangent field is almost constant and for a
generic choice of $\xi$ there are no preimages. 

If $C_p\in\CL{D}(\CL{P})$, its small deformations for $p\in\CL{C}_{\Cl{P}}$,  
and $p\in\CL{R}_{\Cl{P}}$ are shown in Figures
\ref{fig:fields1}b and \ref{fig:fields1}c respectively. 

\begin{figure}[htb]
    \includegraphics[width=5.0in]{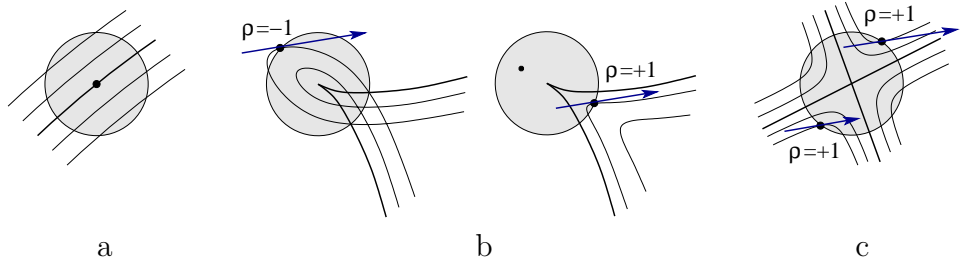}
\centerline{a\hspace{1.9in}b\hspace{1.9in}c}
\vspace{-0.1cm}
\caption{Counting preimages for $p\notin\CL{P}$.
\label{fig:fields1}}
\end{figure}

For $p\in\CL{R}_{\Cl{P}}$, there are two preimages, both with 
$\rho_q=1$ and $w_{C(q,\xi)}=w_{C_p}$, see Figure \ref{fig:fields1}c. Thus 
the local degree of each of these two preimages equals $w_{C_p}=\iota_p$,
so $\deg G_p=2\iota_p$. 

For $p\in\CL{C}_{\Cl{P}}$ there are also two preimages: one with 
$\rho_q=-1$ and $w_{C(q,\xi)}=w_{C_p}$, and the other with 
$\rho_q=1$ and $w_{C(q,\xi)}=-w_{C_p}$, see Figure \ref{fig:fields1}b. 
Thus the local degree of each of these two preimages equals $-w_{C_p}=\iota_p$
and $\deg G_p=2\iota_p$.

\noindent{\bf Case 2: $p\in\CL{P}$.}
If $C_p$ is nodal with node at $p$ (i.e., $C_p\in\CL{D}(p)$), its small 
deformations are shown in Figure \ref{fig:fields2}a.  Again, for sufficiently 
small $\delta$, the corresponding tangent fields are almost constant and for 
a generic choice of $\xi$ there are no preimages. 

\begin{figure}[htb]
    \includegraphics[width=4.4in]{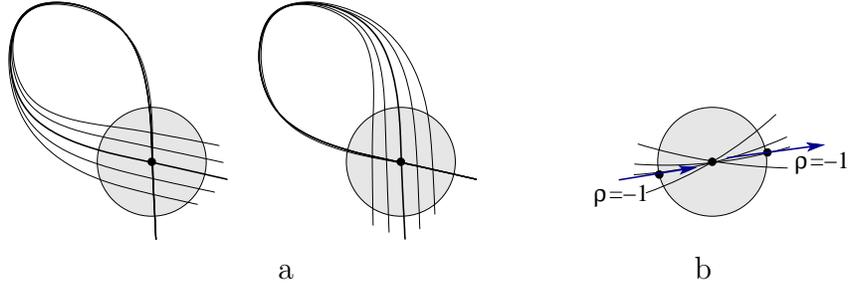}
\vspace{-0.1cm}
\centerline{\hspace{0.7in}a\hspace{2.1in}b}
\vspace{-0.3cm}
\caption{Counting preimages for $p\in\CL{P}$.
\label{fig:fields2}}
\end{figure}

If $C_p=C(p,\xi)$ is nodal with a tangent direction $\xi$ at $p$, its 
small deformations are shown in Figure \ref{fig:fields2}b.  There are 
two preimages, both with $\rho_q=-1$ and $w_{C(q,\xi)}=w_{C(p,\xi)}$, 
see Figure \ref{fig:fields2}b. 
Thus the local degree of each of these two preimages equals $-w_{C(p,\xi)}$
and each such curve $C(q,\xi)$ contributes $-2w_{C(p,\xi)}$ to $\deg G_p$.
By \cite[Proposition 3.4]{W1} 
$$\sum_{C(p,\xi)} w_{C(p,\xi)}+2\sum_{C\in\Cl{D}(p)}w_C=W_d\,,$$
where the first sum is over all nodal curves with a tangent direction 
$\xi$ at $p$. Therefore,  in this case we also get  
$$\deg G_p=-2\sum_{C(p,\xi)} w_{C(p,\xi)}=
2(-W_d+2\sum_{C\in\Cl{D}(p)}w_C)=2\iota_p\,.$$

We finally conclude that in all cases $k_p=\deg G_p=2\iota_p$.
By the choice of the homotopy $h$,
$$I(\CL{K}_p,\sigma_p;M)=k_p\cdot I(\CL{K}_p,\{p\}\times\SSS^1;M)=
2\iota_p\cdot I(\CL{K}_p,\{p\}\times\SSS^1;M).$$

We finish the proof by observing that
\begin{multline*}
I(\CL{K}_p,\{p\}\times\SSS^1;M)=
I(h(\SSS^1\times[0, 1]),[p];\RR^2)=\\
=I(h(\SSS^1\times[0, 1]),[p]-[\infty];\RR^2)=\\
=-I(\partial h(\SSS^1\times[0, 1]),[p,\infty];\RR^2)=\ind_p(\Gamma).
\end{multline*}
\end{proof}

\section{Finite type invariants.}\label{sec:FTI}

Finite type invariants generalize polynomial functions.
This notion is based on the following classical theorem:

\begin{thm}[Frechet 1912]
Given  $x_0,  x_1^\pm,\dots , x_n^\pm \in\RR$ and an $n$-tuple
$\eps=(\eps_1,\dots,\eps_n)\in\{ -1,1\}^n$, let
$x_{\eps}=x_0+x_1^{\eps_1}+\dots+x_n^{\eps_n}$ and
$|\eps|=\prod_{i=1}^n\eps_i$. Then $C^0$-function $f:\RR\to\RR$ is a
polynomial of degree less than $n$, iff
$$\displaystyle\sum_{\eps\in\{ -1,1\}^n}(-1)^{|\eps|}f(x_\eps)=0$$
for any choice of $x_0$ and $x_1^\pm,\dots , x_n^\pm$.
\end{thm}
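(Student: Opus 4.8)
The plan is to recast the alternating sum as an iterated symmetric difference operator and then argue by induction on $n$. For $h\in\RR$ let $D_h$ denote the operator $(D_h g)(x)=g(x+h)-g(x-h)$ on functions $\RR\to\RR$; these operators pairwise commute, each $D_h$ carries polynomials of degree $\le m$ to polynomials of degree $\le m-1$, and $D_h$ annihilates constants. Writing $x_i^{\eps_i}=a_i+\eps_i h_i$ with $a_i=(x_i^++x_i^-)/2$ and $h_i=(x_i^+-x_i^-)/2$, and absorbing $\sum_i a_i$ into $x_0$, a direct computation identifies $\sum_{\eps}(-1)^{|\eps|}f(x_\eps)$ with $(D_{h_1}\cdots D_{h_n}f)(x_0)$ up to an overall sign. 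Since $x_0$ and the $h_i$ are arbitrary, the condition of the theorem is equivalent to $D_{h_1}\cdots D_{h_n}f\equiv 0$ for all $h_1,\dots,h_n$. The implication ``$f$ polynomial of degree ${<}n$ $\Rightarrow$ the sum vanishes'' is then immediate: applying $n$ operators $D_{h_i}$ to a polynomial of degree $\le n-1$ produces the zero polynomial, so the sum is $0$ for every choice of data.

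For the converse I would prove by induction on $n$ that any $f\in C^0(\RR)$ with $D_{h_1}\cdots D_{h_n}f\equiv 0$ for all $h_i$ is a polynomial of degree $\le n-1$. For $n=1$, the identity $f(x_0+h_1)=f(x_0-h_1)$ for all $x_0,h_1$ (take $x_0=h_1=c/2$) forces $f$ to be constant. For $n\ge 2$, fix $h_1,\dots,h_{n-1}$ and let $h_n$ vary: $D_{h_n}(D_{h_1}\cdots D_{h_{n-1}}f)\equiv 0$ forces $G:=D_{h_1}\cdots D_{h_{n-1}}f$ to be constant. Because the $D_h$ commute with translations, the constancy of $G$ says exactly that for every $t\in\RR$ the function $f_t:=f(\cdot+t)-f(\cdot)$ satisfies $D_{h_1}\cdots D_{h_{n-1}}f_t\equiv 0$ for all $h_1,\dots,h_{n-1}$. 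Since $f_t\in C^0(\RR)$, the induction hypothesis applies and each $f_t$ is a polynomial of degree $\le n-2$.

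It then remains to prove the lemma: if $f\in C^0(\RR)$ and $f(\cdot+t)-f(\cdot)$ is a polynomial of degree $\le m$ for every $t$, then $f$ is a polynomial of degree $\le m+1$. Write $f(x+t)-f(x)=\sum_{k=0}^m a_k(t)\,x^k$. Evaluating at $m+1$ fixed distinct values of $x$ and inverting the resulting Vandermonde system expresses each $a_k(t)$ as a fixed linear combination of the functions $t\mapsto f(\xi_j+t)-f(\xi_j)$, hence each $a_k$ is continuous. The identity $f(x+s+t)-f(x)=[f(x+s+t)-f(x+t)]+[f(x+t)-f(x)]$ yields the cocycle relation $\sum_k a_k(s+t)\,x^k=\sum_k a_k(s)\,(x+t)^k+\sum_k a_k(t)\,x^k$. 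Comparing coefficients of $x^k$ from $k=m$ downward, the top coefficient $a_m$ is additive, hence linear by continuity, and for each $k<m$, after subtracting an explicit polynomial of degree $\le m+1-k$ assembled from the already-determined $a_j$ with $j>k$, the remainder is an additive continuous, hence linear, function; thus each $a_k$ is a polynomial in $t$ of degree $\le m+1-k$. Setting $x=0$ gives $f(t)=f(0)+a_0(t)$, a polynomial of degree $\le m+1$, completing the lemma and the induction.

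The main obstacle is this last lemma, and in particular the point at which the $C^0$ hypothesis genuinely enters. A priori the top coefficient $a_m(t)$ is merely additive in $t$, and without regularity an additive function need not be linear; so the essential step is the Vandermonde extraction showing the $a_k$ are continuous, after which one invokes the classical fact that a continuous additive function is linear. The remaining work --- the downward induction on the coefficient index, keeping track of degree bounds in the polynomial cocycle relation --- is routine bookkeeping.
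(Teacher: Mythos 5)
The paper states this theorem without proof, attributing it to Fr\'echet as classical background for the finite type invariant framework, so there is no proof of the authors' own to compare against; your argument must be judged on its own. That said, your proof is essentially correct and follows the standard route: identify the alternating sum with the iterated symmetric difference $(D_{h_1}\cdots D_{h_n}f)(x_0)$, get the forward implication for free since each $D_h$ drops polynomial degree, and prove the converse by induction on $n$, reducing to the lemma that a continuous $f$ whose differences $f(\cdot+t)-f(\cdot)$ are polynomials of degree $\le m$ is itself a polynomial of degree $\le m+1$. You correctly pinpoint where continuity is indispensable (a continuous additive function is linear), and the Vandermonde extraction showing each coefficient $a_k(t)$ is continuous is the right device.

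Two small remarks. First, what you call ``routine bookkeeping'' in the lemma --- producing an explicit polynomial $P$ of degree $\le m+1-k$ with $P(s+t)-P(s)-P(t)=\sum_{j>k}a_j(s)\binom{j}{k}t^{j-k}$ so that $a_k-P$ is additive --- does need a sentence of justification: one must check that this polynomial $2$-cocycle is a polynomial coboundary of the claimed degree (which it is, by symmetry and vanishing on the axes, and can be integrated term by term once the $a_j$, $j>k$, are known to be polynomials). As written it is an assertion, not a proof, though the fact is standard. Second, the paper's literal sign $(-1)^{|\eps|}$ with $|\eps|=\prod_i\eps_i\in\{-1,1\}$ equals $-1$ for every $\eps$, which would trivialize the statement; the intended coefficient is $|\eps|$ itself (equivalently $(-1)^{\#\{i:\,\eps_i=-1\}}$). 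Your identification of the sum with $(D_{h_1}\cdots D_{h_n}f)(x_0)$ uses exactly this corrected reading, so your ``up to an overall sign'' hedge is the right way to handle the paper's typo.
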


Finite type invariants are topological analogues of this definition.
Corresponding theories are developed for a variety of objects:
knots, 3-manifolds, plane curves, graphs, etc. (see \cite{M-P} for a
general theory of finite type invariants of cubic complexes). Let us
briefly recall the main notions in the case of immersed curves in a
punctured plane. Let $\mathfrak{S}\subset\RR^2$ be a finite set
of marked points and $\Gamma_{sing}$  be an immersed plane
curve with $n$ non-generic fragments, contained in $n$ small disks
$\DD_i$ (all in general position). Fix an arbitrary pair of resolutions 
for each $\DD_i$ and call one of them positive and the other negative 
(again, arbitrarily).
Here by a resolution of $\Gamma_{sing}$ in a disk $\DD_i$ we
mean a homotopy of $\Gamma_{sing}$ inside $\DD_i$, fixed on the
boundary $\partial\DD_i$, so that the resulting curve is a generic
immersion inside $\DD_i$ and does not pass through
$\mathfrak{S}\cap\DD_i$. See Figure \ref{fig:fti}.
\begin{figure}[htb]
\centerline{\includegraphics[height=1.7in]{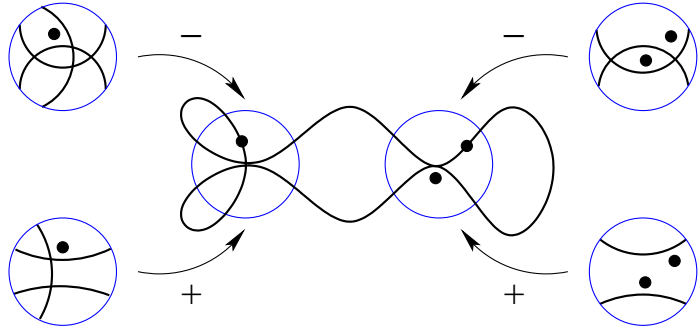}}
\caption{A non-generic curve with a pair of resolutions in each disk.
\label{fig:fti}}
\end{figure}

For an $n$-tuple $\eps\in\{ -1,1\}^n$, resolve all singularities of
$\Gamma_{sing}$ choosing the corresponding $\eps_i$ resolution in
each disk $\DD_i$. Denote by $\Gamma_\eps$ the resulting curve. In
this way, as $\eps$ runs over $\{-1,1\}^n$, we obtain $2^n$
generically immersed curves $\Gamma_\eps$. See Figure \ref{fig:fti_res}.
\begin{figure}[htb]
\centerline{\includegraphics[height=1.6in]{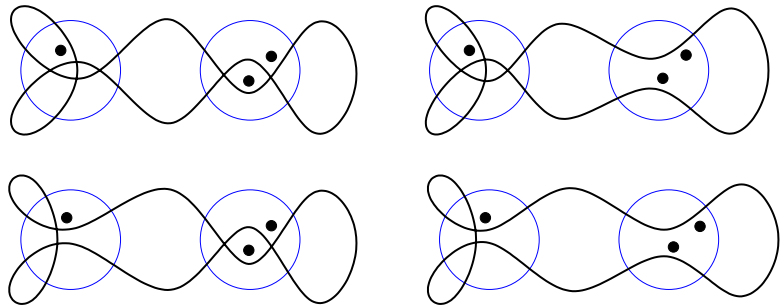}}
\caption{Resolved generic curves.
\label{fig:fti_res}}
\end{figure}

Denote
$|\eps|=\prod_{i=1}^n\eps_i$. A locally-constant function $f$ on the
space of generically immersed curves is called an invariant of
degree less than $n$, if
$$\sum_{\eps\in\{ -1,1\}^n}(-1)^{|\eps|}f(\Gamma_\eps)=0,$$
for any choice of the curve $\Gamma_{sing}$ and its resolutions.

When $\mathfrak{S}=\varnothing$, the only invariant of degree zero
(i.e., a constant function on the space of immersed curves) is the
rotation number $\ind(\Gamma)$. Various interesting invariants of
degree one for $\mathfrak{S}=\varnothing$ were extensively
studied by V.~Arnold, see \cite{Arn}. When $\mathfrak{S}$
consists of one point, we get an additional simple invariant of
degree one, namely $\ind_p(\Gamma)$. In a general case, any
linear combination of $\ind(\Gamma)$ and $\ind_p(\Gamma)$, 
$p\in\mathfrak{S}$ is an invariant of degree at most one.

Finite type invariants naturally appear in real enumerative geometry.
One of the simplest examples was considered in Section \ref{sub:motivation}.
Note that in the formula \eqref{eq:N motivation}, an algebraic
number of lines passing  through a point $p$ and tangent to a
generic immersed curve $\Gamma\subset\RR^2\minus\{p\}$ is
expressed via invariants $\ind(\Gamma)$, $\ind_p(\Gamma)$ of
degrees zero and one.
This fact is easy to explain. Let us show, that if a certain algebraic
number of lines satisfying some passage/tangency conditions is
a locally constant function $f$ on the space of generic immersed
curves, then it is an invariant of degree less than or equal to two.
Indeed, let $\Gamma_{sing}$ be an immersed curve with three
non-generic fragments contained in three small disks $\DD_i$, 
$i=1,2,3$, which do not lie on one line (i.e., no line passes 
through all three of them).
Suppose  that some line $l$ is counted for one of the resolutions
$\Gamma_\eps$ of $\Gamma_{sing}$. Then $l$ does not pass
through at least one of the disks, say, $\DD_1$. But then $l$ is
counted twice -- with opposite signs -- for both resolutions of
$\Gamma_{sing}$ inside $\DD_1$, hence its contribution to $f$
sums up to $0$, and we readily get $f(\Gamma_{sing})=0$.

By the same argument (noticing that no rational curves of degree
$d$ pass through $3d$ generic points), we immediately obtain the
following

\begin{thm}
Suppose that a certain algebraic number of real rational algebraic
plane curves of degree $d$, satisfying some passage/tangency
conditions, is a locally constant function on the space of generic
immersed curves. Then it is an invariant of degree less than or equal
to $3d-1$.
\end{thm}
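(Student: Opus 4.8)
The plan is to mimic the argument just given for lines (the $d=1$ case), replacing ``no line passes through all three small disks'' by the algebro-geometric fact that no rational curve of degree $d$ passes through $3d$ generic points. Concretely, I would set $n = 3d$ and let $\Gamma_{sing}$ be an immersed curve with $n$ non-generic fragments, each contained in a small disk $\DD_i$, $i = 1, \dots, 3d$, chosen so that the $3d$ disks are in general position — in particular, after shrinking the disks and picking representative points $q_i \in \DD_i$, no rational curve of degree $d$ passes through all $3d$ of the $q_i$. This genericity is possible because the disks can be taken arbitrarily small and placed in general position, and the locus of configurations of $3d$ points lying on a common degree-$d$ rational curve is a proper closed subvariety of the configuration space. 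I would then consider the $2^{3d}$ resolutions $\Gamma_\eps$, $\eps \in \{-1,1\}^{3d}$, and the alternating sum $\sum_\eps (-1)^{|\eps|} f(\Gamma_\eps)$, and show it vanishes for every such $\Gamma_{sing}$, which by definition means $f$ has degree less than or equal to $3d-1$ (here I interpret $3d = (3d-1) + 1$, so ``degree less than $3d$'' is ``degree at most $3d-1$'').

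The key step is the pairing/cancellation argument. Any real rational plane curve $C$ of degree $d$ that is counted (with its sign) for some resolution $\Gamma_\eps$ satisfies some fixed passage/tangency conditions; in particular, the passage conditions involve at most $3d-1$ prescribed points, so the tangency conditions impose at most finitely many constraints and $C$ can be tangent to $\Gamma_\eps$ only along some bounded locus. Since $C$ is a fixed algebraic curve of degree $d$ and the $3d$ points $q_i$ do not all lie on $C$, the curve $C$ misses at least one disk, say $\DD_{i_0}$ — more precisely, for $\delta$ small enough $C \cap \DD_{i_0} = \varnothing$. I would make this uniform: because there are only finitely many degree-$d$ real rational curves relevant to the count for any given resolution, and the count is locally constant, one can choose the disks small enough that \emph{every} curve contributing to any $\Gamma_\eps$ avoids at least one disk. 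Then the contribution of such a $C$ to $f(\Gamma_\eps)$ is unchanged when we flip $\eps_{i_0}$ (the resolution inside $\DD_{i_0}$), since $C$ never enters $\DD_{i_0}$ and hence does not see which resolution was chosen there; but flipping $\eps_{i_0}$ changes $(-1)^{|\eps|}$ by a sign, so the contributions of $C$ to the two terms cancel in the alternating sum. Summing over all contributing curves and all resolutions, the whole alternating sum vanishes.

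The main obstacle I anticipate is making the ``every relevant curve avoids at least one disk'' claim genuinely uniform and rigorous, rather than curve-by-curve. For a single algebraic curve $C$ of degree $d$, avoiding a disk is immediate once the disk is small and placed off $C$; but $f$ is defined as a sum over \emph{all} curves satisfying the passage/tangency conditions relative to $\Gamma_\eps$, and as $\eps$ varies these are slightly different curves. The cleanest way around this is to note that the passage conditions fix (at most) $3d-1$ points $\CL{P}$, which we may take disjoint from all the $\DD_i$; the finitely many real rational degree-$d$ curves through $\CL{P}$ (plus the degenerate curves in $\CL{D}(\CL{P})$) form a fixed finite set independent of $\eps$, and for each of them ``lies on a common degree-$d$ curve with all $q_i$'' fails, so each avoids some $\DD_i$ for $\delta$ small; taking $\delta$ smaller than the minimum over this finite set makes the avoidance uniform. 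One then has to check that \emph{tangency} to $\Gamma_\eps$ really does only involve these curves and does not, for instance, force new curves to appear that depend on the resolution — but this is exactly the content of the general-position hypotheses already set up in Subsection \ref{subsect:generalposition}, so it should be routine to invoke. The argument is otherwise a direct copy of the $d=1$ case, with the input ``three disks not on a line'' upgraded to ``$3d$ disks not on a rational degree-$d$ curve,'' as indicated in the sentence preceding the theorem statement.
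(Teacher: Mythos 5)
Your proposal is correct and takes essentially the same route as the paper, whose entire proof is the single remark that the $d=1$ argument carries over verbatim once one notices that no rational degree-$d$ curve passes through $3d$ generic points. The uniformity concern you flag is absorbed into the ``small disks in general position'' requirement already built into the definition of a finite type invariant; your proposed fix works but implicitly assumes the passage conditions pin down $3d-1$ points, whereas in the general case (fewer prescribed points, so the contributing curves vary with $\eps$) one instead invokes compactness of the space of degree-$d$ curves in $\RR\PP^2$ to conclude that for sufficiently small disks no degree-$d$ rational curve through the prescribed passage points can meet all $3d$ disks $\DD_i$, which is all the involution argument needs.
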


Moreover, if a curve is required to pass through $k$ fixed points
(in general position), then an algebraic number of such curves is
an invariant of degree less than or equal to $3d-k-1$. In particular,
for $k=3d-2$ we get the upper bound one on the degree of an 
invariant. This explains the structure of formula
\eqref{eqn:main formula} of Theorem \ref{thm:main result}.


\end{document}